\documentclass[12pt]{article}

\usepackage{amsmath,amsthm,amssymb}
\usepackage{mathtools}

\allowdisplaybreaks
\usepackage{mathrsfs}
\usepackage{cite}

\textwidth15.5cm
\textheight21cm
\oddsidemargin0cm
\evensidemargin0cm

\usepackage[pdftex,bookmarksopen=true,bookmarks=true,unicode,setpagesize]{hyperref}
\hypersetup{colorlinks=true,linkcolor=black,citecolor=black}

\newtheorem{theorem}{Theorem}
\newtheorem{corollary}[theorem]{Corollary}
\newtheorem{lemma}[theorem]{Lemma}

\theoremstyle{remark}

\theoremstyle{remark}

\theoremstyle{remark}
\newtheorem{remark}[theorem]{Remark}

\newcommand{\R}{\mathbb R}
\newcommand{\N}{\mathbb N}
\newcommand{\di}{\partial}
\newcommand{\la}{\langle}
\newcommand{\ra}{\rangle}

\begin{document}

\vspace{-20mm}
\begin{center}{\Large \bf
Meixner class of  orthogonal polynomials\\
 of a non-commutative monotone L\'evy noise
}
\end{center}

{\large Eugene Lytvynov}\\ Department of Mathematics,
Swansea University, Singleton Park, Swansea SA2 8PP, U.K.;
e-mail: \texttt{e.lytvynov@swansea.ac.uk}\vspace{2mm}

{\large Irina Rodionova}\\ Department of Mathematics,
Swansea University, Singleton Park, Swansea SA2 8PP, U.K.;
e-mail: \texttt{e.lytvynov@swansea.ac.uk}\vspace{2mm}


{\small

\begin{center}
{\bf Abstract}
\end{center}
\noindent 
Let $(X_t)_{t\ge0}$ denote a non-commutative monotone L\'evy process.  Let $\omega=(\omega(t))_{t\ge0}$ denote 
the corresponding monotone L\'evy noise, i.e., formally $\omega(t)=\frac d{dt}X_t$. 
A continuous polynomial of $\omega$ is an element of the corresponding non-commutative $L^2$-space $L^2(\tau)$ that has the form $\sum_{i=0}^n\la \omega^{\otimes i},f^{(i)}\ra$, where $f^{(i)}\in C_0(\R_+^i)$. We denote by $\mathbf{CP}$ the space of all continuous polynomials of $\omega$. For $f^{(n)}\in C_0(\R_+^n)$, the  orthogonal polynomial $\la P^{(n)}(\omega),f^{(n)}\ra$ is defined as the orthogonal projection of the monomial $\la\omega^{\otimes n},f^{(n)}\ra$ onto the subspace of $L^2(\tau)$ that is orthogonal to all continuous polynomials of $\omega$ of order $\le n-1$. We denote by $\mathbf{OCP}$ the linear span  of the  orthogonal polynomials. Each orthogonal polynomial $\la P^{(n)}(\omega),f^{(n)}\ra$ depends only on the restriction of the function $f^{(n)}$ to the set
$\{(t_1,\dots,t_n)\in\R_+^n\mid t_1\ge t_2\ge\dots\ge t_n\}$. 
The orthogonal polynomials allow us to construct a unitary operator $J:L^2(\tau)\to\mathbb  F$, where $\mathbb F$ is an extended monotone Fock space. Thus, we may think of the monotone noise $\omega$ as a distribution of linear operators acting in $\mathbb F$.
 We say that the orthogonal polynomials belong to the Meixner class if $\mathbf{CP}=\mathbf{OCP}$. 
 We prove that each system of orthogonal polynomials from the Meixner class is characterized by two parameters: $\lambda\in\R$ and $\eta\ge0$. In this case, the monotone L\'evy noise has the  representation  $\omega(t)=\di_t^\dag+\lambda\di_t^\dag\di_t+\di_t+\eta\di_t^\dag\di_t\di_t$. Here, $\di_t^\dag$ and $\di_t$ are the (formal) creation and annihilation operators at $t\in\R_+$ acting in $\mathbb F$. 
  } \vspace{2mm}

 {\bf Keywords:} Monotone independence, monotone L\'evy noise, monotone L\'evy process, Meixner class of orthogonal polynomials. \vspace{2mm}

{\bf 2010 MSC:} 46L53,  60G20, 60G51,  	60H40 

\section{Introduction}

The Meixner class of orthogonal polynomials on $\R$ was originally derived by Meixner \cite{Meixner} as the class of all Sheffer sequences of monic polynomials that are orthogonal with respect to a probability measure on $\R$ with infinite support. They include the Hermite polynomials, the Charlier polynomials, the Laguerre polynomials, the Meixner polynomials of the first kind, and the Meixner polynomials of the second kind (also called the Meixner--Polaczek polynomials).

Let $(p_n)_{n=0}^\infty$ be a polynomial sequence from the Meixner class.
Let us assume that the measure of orthogonality of these polynomials is centered. Let $\di^\dag$ and $\di$ be linear operators acting on polynomials on $\R$ that satisfy $\di^\dag p_n=p_{n+1}$  and $\di p_n=np_{n-1}$ for all $n$. 
 Then there exist three parameters, $\lambda\in\R$,  $\eta\ge0$,  $k>0$, such that 
 \begin{equation}\label{yr7i58}x=\di^\dag+\lambda\di^\dag\di+k\di+\eta\di^\dag\di\di.\end{equation}
  In this formula,  $x$ denotes the operator of multiplication by the variable $x$, considered as a linear operator acting on polynomials.
  
 For each Meixner sequence of polynomials, its measure of orthogonality is infinitely divisible. Thus, the Meixner class is  related to  L\'evy processes.

It appears that the notion of the Meixner class of orthogonal polynomials admits several generalizations. 
The first generalization is related to changing the definition of the operators $\di^\dag$ and $\di$. Recall that, for $q\in[-1,1]$, one defines the $q$-numbers $[n]_q:=1+q+q^2+\dots+q^{n-1}$. Then, the $q$-Meixner class of orthogonal polynomials on $\R$ is defined as the class of the polynomial sequences $(p_n)_{n=0}^\infty$ satisfying formula \eqref{yr7i58} in which $\di^\dag p_n=p_{n+1}$  and $\di p_n=[n]_q\,p_{n-1}$, see  \cite{BW1,BW2} and the references therein.

Furthermore, it is also possible to generalize the notion of the Meixner class of orthogonal polynomials  to the (classical) infinite dimensional setting, as well as to some non-commutative settings.  

Let us briefly describe the extension to the classical infinite dimensional setting, see \cite{BML,Ly2,L2} for details and \cite{AFS,KL,KSSU,L3,NS,Rod} for related topics.
Consider the Gel'fand triple
$$\mathcal D\subset L^2(\R_+,dt)\subset\mathcal D'.$$
Here $\mathcal D$ is the nuclear space of smooth compactly supported functions on $\R_+:=[0,\infty)$ and $\mathcal D'$ is the dual space of $\mathcal D$, where the dual pairing between $\mathcal D'$ and $\mathcal D$ is obtained by continuously extending the inner product in  $ L^2(\R_+,dt)$. A continuous polynomial $P$ of  $\omega\in\mathcal D'$ is a function $P:\mathcal D'\to\R$ of the form
$$P(\omega)=\sum_{i=0}^n\langle \omega^{\otimes i}, f^{(i)}\rangle,\quad\omega\in\mathcal D', $$
where $f^{(i)}\in\mathcal D^{\odot i}$ (i.e., $f^{(i)}$ belongs to the $i$th symmetric tensor power of $\mathcal D$) and $\langle \omega^{\otimes i}, f^{(i)}\rangle$ denotes the dual pairing of $\omega^{\otimes i}\in\mathcal D^{\prime\,\odot i}$ and $f^{(i)}\in\mathcal D^{\odot i}$.
We denote by $\mathbf{CP}$ the space of all continuous polynomials of $\omega$.

Let $\mu$ be a L\'evy white noise measure on $\mathcal D'$. Thus, $\mu$ is a probability measure on $\mathcal D'$ whose Fourier transform has the Kolmogorov representation
\begin{equation}\label{rtr8l}
\int_{\mathcal D'}e^{i\langle \omega,h\rangle}\,d\mu(\omega)=\exp\left(\int_{\R_+}\int_{\R}
(e^{ish(t)}-1-ish(t))\frac1{s^2}\,d\nu(s)\,dt
\right),\quad h\in\mathcal D.\end{equation}
Here $\nu$ is a finite measure on $\R$. We assume that $\nu$ has all moments finite.
We  also assume, for simplicity, that $\nu$ is a probability measure.

For $f^{(n)}\in\mathcal D^{\odot n}$, we 
define the  orthogonal polynomial  $\la P^{(n)}(\omega),f^{(n)}\ra$ as the orthogonal projection of the monomial $\la\omega^{\otimes n},f^{(n)}\ra$ onto the subspace of $L^2(\mathcal D',\mu)$ that is orthogonal to all continuous polynomials of $\omega$ of order $\le n-1$. We denote by $\mathbf{OCP}$ the linear span of all orthogonal polynomials.

By using the orthogonal polynomials, one constructs a unitary operator $$J:L^2(\mathcal D',\mu)\to\mathbb F,$$
 where
$$\mathbb F=\mathbb R\oplus\bigoplus_{n=1}^\infty L^2_{\mathrm{sym}}(\R_+^n,m_n)$$
is an extended symmetric Fock space. Here $m_n$ is a Radon measure on $\R_+^n$ (which depends on the Kolmogorov measure $\nu$ in formula \eqref{rtr8l}) and $L^2_{\mathrm{sym}}(\R_+^n,m_n)$ denotes the subspace of all symmetric functions from $L^2(\R_+^n,m_n)$. The unitary operator $J$ is given by
$$J\la P^{(n)}(\cdot),f^{(n)}\ra=(0,\dots,0,\underbrace{f^{(n)}}_{\text{$n$th place}},0,0,\dots)\in\mathbb F.$$

For $h\in\mathcal D$, let us preserve the notation $\la\omega,h\ra$ for the operator of multiplication by $\la\omega,h\ra$ in $L^2(\mathcal D',\mu)$. In terms of the unitary operator $J$, we may also think of  $\la\omega,h\ra$ as a linear operator in $\mathbb F$.

We say that the orthogonal polynomials $\la P^{(n)}(\omega),f^{(n)}\ra$ belong to the Meixner class if $\mathbf{CP}=\mathbf{OCP}$. Each system of orthogonal polynomials from the Meixner class is characterized by two parameters: $\lambda\in\R$ and $\eta\ge0$. For each choice of such parameters, we have the following equality for the action of the operator $\la\omega,h\ra$ in $\mathbb F$: for each $f\in\mathcal D$:
\begin{align}
\la\omega,h\ra f^{\otimes n}&=h\odot f^{\otimes n}+\lambda n(hf)\odot f^{\otimes(n-1)}\notag\\
&\quad+n\int_{\R_+}h(u)f(u)\,du\, f^{\otimes (n-1)}+\eta n(n-1)(hf^2)\odot f^{\otimes (n-2)}.
\label{r6e7o896}\end{align}
The choice $\lambda=\eta=0$ gives the  infinite dimensional Hermite polynomials with   $\mu$ being  Gaussian white noise measure; the choice $\lambda\ne0$,  $\eta=0$ gives the infinite dimensional Charlier polynomials with $\mu$ being a centered Poisson
random  measure; the choice $|\lambda|=2\sqrt \eta$, $\eta>0$ gives the infinite dimensional Laguerre polynomials with $\mu$ being the centered gamma random measure;  the choice $|\lambda|>2\sqrt \eta$, $\eta>0$  gives the infinite dimensional  Meixner polynomials of the first kind with $\mu$ being a centered negative binomial random measure; and the choice $|\lambda|<2\sqrt \eta$, $\eta>0$  gives the   infinite dimensional  Meixner polynomials of the second kind with $\mu$
 being the Meixner measure on $\mathcal D'$.
 
 Let $\delta_t$ denote the delta function at $t$.
For each $t\in\R_+$, we formally define operators $\di_t^\dag$, $\di_t$ on $\mathbb F$  by 
$$ \di^\dag_t  f^{\otimes n}:=\delta_t\odot f^{\otimes n}, \quad \di_t  f^{\otimes n}:= nf(t) f^{\otimes (n-1)}.$$
(To be more precise, $\di_t^\dag$ is a formal operator, while the operator $\di_t$ is rigorously defined on a subspace of $\mathbb F$.)
Then formula \eqref{r6e7o896} can be formally written in the form
\begin{equation}\label{yuro86}
\omega(t)=\di_t^\dag+\lambda\di_t^\dag\di_t+\di_t+\eta\di_t^\dag\di_t\di_t,
\end{equation}
compare with \eqref{yr7i58}. (Note that $k=1$ in our case since we chose $\nu$ to be a probability measure.)

In non-commutative probability, monomials $\langle\omega,h\rangle$ are replaced with non-commutative operators $\langle\omega,h\rangle$ (so that $\omega(t)$ can be thought of as a non-commutative noise), while the probability measure $\mu$ is replaced by a state on the algebra of polynomials generated by the monomials $\langle\omega,h\rangle$. There are several non-commutative generalizations of  independence. The most studied  one is free independence, see e.g. \cite{NiSp,Voiculescu}. In the framework of free probability, the Meixner class of orthogonal polynomials of a free L\'evy noise was studied in \cite{BL1,BL2}. 
This study led to a  formula similar to    \eqref{yuro86}. 
For other studies of the free Meixner-type L\'evy processes and the free Meixner polynomials on $\R$ we refer to \cite{a1,a2,a3,a4,a5,a6,BB,Bryc,E1,E2,SY}.

Furthermore, in \cite{bozejkolytvynovwysoczanskiCMP2012}, the notion of a non-commutative L\'evy  noise was introduced for the anyon statistics, and in  \cite{BLR}, the corresponding Meixner class of non-commutative orthogonal polynomials was studied.  Quite unexpectedly, this class was again fully described by a formula similar to  \eqref{yuro86}, albeit its meaning was quite different. Note that the Kolmogorov measures $\nu$ of the corresponding anyon noises are the same as in the case of the classical Meixner noises.

In this paper, we will deal with another important example of non-commutative independence: the monotone independence. This notion was introduced and studied by Muraki \cite{M1,M2,M3}, see also \cite{Franz1,Franz2}. The L\'evy processes of the monotone independence were studied in \cite{Franz2,FM}. Note that there is also the related notion of the anti-monotone independence. We will not discuss it in this paper but only mention that, when trivially modified, all  the results of the present paper hold in the anti-monotone case.  

The main result of the paper is a characterization of the Meixner class of orthogonal polynomials of a monotone L\'evy noise. More precisely, let $(X_t)_{t\ge0}$ be a (non-commutative) monotone L\'evy process.  Let $\omega=(\omega(t))_{t\ge0}$ denote 
the corresponding monotone L\'evy noise, i.e., formally $\omega(t)=\frac d{dt}X_t$. 
A continuous polynomial of $\omega$ is an element of the corresponding non-commutative $L^2$-space $L^2(\tau)$ that has the form $\sum_{i=0}^n\la \omega^{\otimes i},f^{(i)}\ra$, where $f^{(i)}\in C_0(\R_+^i)$. We denote by $\mathbf{CP}$ the space of all continuous polynomials of $\omega$. For $f^{(n)}\in C_0(\R_+^n)$, the  orthogonal polynomial $\la P^{(n)}(\omega),f^{(n)}\ra$ is defined as the orthogonal projection of the monomial $\la\omega^{\otimes n},f^{(n)}\ra$ onto the subspace of $L^2(\tau)$ that is orthogonal to all continuous polynomials of $\omega$ of order $\le n-1$. We denote by $\mathbf{OCP}$ the linear span of all orthogonal polynomials. Each orthogonal polynomial $\la P^{(n)}(\omega),f^{(n)}\ra$ depends only on the restriction of the function $f^{(n)}$ to the set
\begin{equation}\label{t7r9}
T_n:=\{(t_1,\dots,t_n)\in\R_+^n\mid  t_1\ge t_2\ge\dots\ge t_n\}.\end{equation}

The orthogonal polynomials allow us to construct a unitary operator $J:L^2(\tau)\to\mathbb  F$, where $\mathbb F$ now denotes  an extended monotone Fock space:
\begin{equation}\label{rtei786807}
\mathbb F=\mathbb R\oplus\bigoplus_{n=1}^\infty L^2(T_n,m_n).\end{equation}
Here $m_n$ is a Radon measure on $T_n$, determined by the Kolmogorov measure $\nu$ of the monotone noise $\omega$.
 By using this operator $J$, we may think of the monotone noise $\omega$ as a distribution of linear operators acting in $\mathbb F$.

 We say that the orthogonal polynomials of $\omega$ belong to the Meixner class if $\mathbf{CP}=\mathbf{OCP}$. 
 We prove that each system of orthogonal polynomials from the Meixner class is again characterized by two parameters: $\lambda\in\R$ and $\eta\ge0$. In this case, the monotone L\'evy noise has 
 again representation \eqref{yuro86}. 
  Here, for a point $t\in\R_+$, $\di_t^\dag$ and $\di_t$ are the (formal) creation and annihilation operators at $t$ acting in  $\mathbb F$. 
It is worth noting  that the corresponding Kolmogorov measures $\nu$ appear to be the same as in the case of free independence. 

Let us mention a drastic difference between the monotone case and all the other cases mentioned above, see Remark~\ref{ye7648o5680} below for details. In the monotone case, it is possible that a continuous monomial $\la\omega^{\otimes n},f^{(n)}\rangle$  is a non-zero element of $L^2(\tau)$ but belongs to the space of all continuous polynomials of order $\le n-1$. A necessary condition for this is that the restriction of the function $f^{(n)}$ to $T_n$ is equal to zero.

The paper is organized as follows. In Section 2, we briefly recall the notion of monotone independence,  define a monotone L\'evy noise and the corresponding non-commutative $L^2$-space. In Section 3, we formulate the main results. Finally, in Section~4, we prove the results.

Among numerous open problems related to the monotone Meixner orthogonal polynomials, let us mention only one: the explicit form of their generating function; compare with the form of the generating function of the Meixner 
orthogonal polynomials in the free setting \cite{BL2}, see also \cite{a1,a3,a4,SY}.

\section{Monotone L\'evy noise}

Let us first recall the notion of monotone independence, cf. \cite{M2}.  Let $\mathcal F$ be a real Hilbert space and let $\mathcal L(\mathcal F)$ denote the space of all continuous linear operators in $\mathcal F$. Let $\Omega\in\mathcal F$ be a unit vector, and define a state  $\tau:\mathcal L(\mathcal F)\to\R$ by 
$\tau(A):=(A\Omega,\Omega)_{\mathcal F}$. Subalgebras (not necessarily unital) $\mathcal A_1,\mathcal A_2,\dots,\mathcal A_r$ of $\mathcal L(\mathcal F)$ are called {\it monotonically independent with respect to $\tau$} if, for any $i<j>k$ and $A\in\mathcal A_i$, $B\in\mathcal A_j$, $C\in\mathcal A_k$,
$$ABC=\tau(B)AC,$$
and any $i_1>\dots>i_m>j<k_1<\dots<k_n$, $A_1\in \mathcal A_{i_1},\dots,A_m\in\mathcal A_{i_m}$, $B\in\mathcal A_j$, $C_1\in\mathcal A_{k_1},\dots,C_n\in\mathcal A_{k_n}$,
$$\tau(A_1\dotsm A_mBC_1\dotsm C_n)=\tau(A_1)\dotsm \tau(A_m)\tau(B)\tau (C_1)\dotsm\tau(C_n).$$
Operators $A_1,\dots,A_r\in\mathcal L(\mathcal F)$ are called {\it monotonically independent with respect to $\tau$} if the subalgebras $\mathcal A_i=\operatorname{l.s.}(A_i^k\mid k\in\mathbb N)$, $i=1,2,\dots,r$, are monotonically independent. Here $\operatorname{l.s.}$ denotes the linear span. 

Let $B_0(\R_+)$ denote the linear space of all measurable bounded functions on $\R_+$ with compact support. We endow $B_0(\R_+)$ with a topology such that  a sequence $(h_n)_{n=1}^\infty$  converges to a function $h$ in $B_0(\R_+)$ if all functions $h_n$ vanish outside a compact set in $\R_+$ and $\sup_{t\in\R_+}|h_n(t)-h(t)|\to0$ as $n\to\infty$.

Let $(\langle\omega,h\ra)_{h\in B_0(\R_+)}$ be a family of operators from $\mathcal L(\mathcal F)$. We assume that 
 the operators $\langle\omega,h\ra$ depend on $h$ linearly,
and if $h_n\to h$ in $B_0(\R_+)$, then $\la\omega,h_n\ra\to\la \omega, h\ra$ strongly in $\mathcal L(\mathcal F)$.  We may formally think of $\omega$ as an operator-valued distribution. 

We will say that $\omega$ is a {\it monotone L\'evy noise} if the following conditions are satisfied.
\begin{itemize}
\item[(i)] Let  $0\le t_0<t_1<t_2<\dots<t_r$ and let  functions $h_1,h_2,\dots,h_r\in B_0(\R_+)$ be such that, for each  $i=1,2,\dots,r$, $h_i=h_i\,\chi_{[t_{i-1},t_i]}$. (Here $\chi_\Delta$ denotes the indicator function of a set $\Delta$.) Then the operators $\la\omega,h_1\ra,\dots,\la\omega,h_r\ra$ are monotonically independent.

\item[(ii)] For any $h_1,\dots,h_n\in B_0(\R_+)$ and any $u>0$,
$$\tau\big(\la\omega,h_1\ra\dotsm \la\omega,h_n\ra\big)=\tau\big(\la\omega,\mathcal S_u h_1\ra\dotsm \la\omega,\mathcal S_uh_n\ra\big).$$
Here, for $h\in B_0(\R_+)$ and $u>0$, we define $\mathcal S_uh\in B_0(\R_+)$ by
$$(\mathcal S_uh)(t):=\begin{cases}0,&\text{if }0\le t<u,\\
h(t-u),&\text{if }t\ge u.\end{cases}$$
\end{itemize}

If $\omega$ is a monotone L\'evy noise, we define, for $t\ge0$, $X_t:=\la\omega,\chi_{[0,t]}\ra$. Then $(X_t)_{t\ge0}$ is a monotone L\'evy process, cf.\ \cite[Section 4]{FM}.

By analogy with \cite{Franz2,FM}, we will now present an explicit construction of a monotone L\'evy noise.  
Let $\nu$ be a probability measure on $\R$ with compact support.
For each $n\in\N$, 
let $T_n$ be defined by \eqref{t7r9} and we denote 
\begin{align*}
S_n:&=\big\{(t_1,s_1,\dots,t_n,s_n)\in(\R_+\times\R)^n\mid (t_1,\dots,t_n)\in T_n\big\},\\
\mathcal F^{(n)}:&=L^2(S_n,dt_1\,d\nu(s_1)\dotsm dt_n\,d\nu(s_n)).
\end{align*}
Let also $\mathcal F^{(0)}:=\R$. We define the {\it monotone Fock space} by 
$\mathcal F:=\bigoplus_{n=0}^\infty \mathcal F^{(n)}$. 
The vector $\Omega=(1,0,0,\dots)\in\mathcal F$ will be called the {\it vacuum}. As usual, we will identify $f^{(n)}\in\mathcal F^{(n)}$ with the corresponding element $(0,\dots,0,\underbrace{f^{(n)}}_{\text{$n$th place}},0,\dots)$ of $\mathcal F$. 
We define the {\it vacuum state on $\mathcal L(\mathcal F)$} by
$\tau(A):=(A\Omega,\Omega)_{\mathcal F}$ for $A\in\mathcal L(\mathcal F)$.

Let  $h\in B_0(\R_+)$. We define a {\it creation operator} $a^+(h)$, a {\it neutral operator} $a^{0}(h)$, and an {\it annihilation operator} $a^-(h)$ as bounded linear operators in $\mathcal F$ that satisfy the following conditions. For the creation operator, we have $a^+(h)\Omega=h$ (here $h(t,s):=h(t)$), and for $f^{(n)}\in\mathcal F^{(n)}$, $n\in\mathbb N$, we have $a^+(h)f^{(n)}\in\mathcal F^{(n+1)}$,  
$$\big(a^+(h)f^{(n)}\big)(t_1,s_1,\dots,t_{n+1},s_{n+1})=h(t_1)f^{(n)}(t_2,s_2,\dots,t_{n+1},s_{n+1}).$$
For the neutral operator, $a^0(h)\Omega=0$ and for each $f^{(n)}\in\mathcal F^{(n)}$, $n\in\mathbb N$, we have $a^0(h)f^{(n)}\in\mathcal F^{(n)}$,
$$\big(a^0(h)f^{(n)}\big)(t_1,s_1,\dots,t_n,s_n)=h(t_1)s_1f^{(n)}(t_1,s_1,\dots,t_n,s_n).$$
For the annihilation operator, $a^-(h)\Omega=0$ and for $f^{(n)}\in\mathcal F^{(n)}$, $a^-(h)f^{(n)}\in\mathcal F^{(n-1)}$ and
$$\big(a^-(h)f^{(n)}\big)(t_1,s_1,\dots,t_{n-1},s_{n-1})=\int_{t_1}^\infty\int_\R
 h(u)f^{(n)}(u,v,t_1,s_1,\dots,t_{n-1},s_{n-1})\,d\rho(v)\,du.$$
As easily seen, the annihilation operator $a^-(h)$ is the adjoint of the creation operator $a^+(h)$, while the neutral operator $a^0(h)$
 is self-adjoint. Thus, for each $h\in B_0(\R_+)$, we define a self-adjoint operator
 $$\la\omega,h\ra:=a^+(h)+a^0(h)+a^-(h).$$
Note that $\tau(\la\omega,h\ra)=0$. It can be easily checked that $\omega$ is a monotone L\'evy noise. In fact, it follows from \cite{FM} that we have just described essentially all  centered monotone L\'evy noises up to equivalence. (To construct all centered monotone L\'evy processes, one needs to assume that the measure $\nu$ is finite rather than probability)

Let $\mathbf A$ denote the real algebra generated by the operators $(\la\omega,h\ra)_{h\in B_0(\R_+)}$ and the identity operator in $\mathcal F$. We define an inner product on $\mathbf A$ by
$$(P_1,P_2)_{L^2(\tau)}:=\tau(P_2^*P_1)=(P_1\Omega,P_2\Omega)_{\mathcal F},\quad P_1,P_2\in\mathbf A.$$
Let $\widetilde {\mathbf A}:=\{P\in\mathbf A\mid(P,P)_{L^2(\tau)}=0\}$. We define the non-commutative $L^2$-space $L^2(\tau)$ as the completion  of the quotient space $\mathbf A /\widetilde{\mathbf A}$ with respect to the norm generated by the scalar product $(\cdot,\cdot)_{L^2(\tau)}$. Elements $P\in\mathbf A$ are considered as representatives of the equivalence classes from
$\mathbf A /\widetilde{\mathbf A}$, and so $\mathbf A$ becomes a dense subspace of $L^2(\tau)$. 

Note that, for each $h\in B_0(\R_+)$,
$$(\la\omega,h\ra P_1,P_2)_{L^2(\tau)}=(P_1,\la\omega,h\ra P_2)_{L^2(\tau)},\quad P_1,P_2\in\mathbf A.$$
Hence, $\mathbf A\ni P\mapsto \la \omega,h\ra P\in L^2(\tau)$ is a well defined linear operator in $L^2(\tau)$, i.e., $\la \omega,h\ra P=0$ for each $P\in\widetilde{\mathbf A}$, see e.g.\ \cite[Ch.~5, Sect.~5, subsec.~2]{BK}. Furthermore, we may extend this operator by continuity to a bounded self-adjoint linear operator in $L^2(\tau)$. With an abuse of notation, we will denote this operator of left multiplication by $\la \omega,h\ra$ in $L^2(\tau)$ by $\la \omega,h\ra$.

\section{The main results}

We will now present the main results of the paper.

\begin{theorem}\label{yur7}
The vacuum vector $\Omega$  is cyclic for the operator family $\big(\la\omega,h\ra\big)_{h\in B_0(\R_+)}$, i.e., the set $\{P\Omega\mid P\in\mathbf A\}$ is dense in $\mathcal F$. Hence, the mapping $\mathbf A\ni P\mapsto IP:=P\Omega\in\mathcal F$ extends by continuity to a unitary operator $I:L^2(\tau)\to\mathcal F$. 
\end{theorem}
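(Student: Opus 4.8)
The plan is to take the isometry of $I$ for granted — it is immediate from the definition $(P_1,P_2)_{L^2(\tau)}=(P_1\Omega,P_2\Omega)_{\mathcal F}$ — and to concentrate on the only nontrivial point, namely that $\{P\Omega\mid P\in\mathbf A\}$ is dense in $\mathcal F$. First I would fix a convenient total set in $\mathcal F$: finite sums of vectors of the form $\chi_{I_1}(t_1)\cdots\chi_{I_n}(t_n)\,s_1^{p_1}\cdots s_n^{p_n}\in\mathcal F^{(n)}$, where $I_1>I_2>\dots>I_n$ are pairwise disjoint intervals (so that the box $I_1\times\dots\times I_n$ lies in $T_n$) and $p_j\ge0$. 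That these span a dense subspace of $\mathcal F$ rests on two standard facts: the indicators of ordered disjoint boxes are total in each $L^2(T_n,m_n)$, and, since $\nu$ has compact support, the monomials $(s^p)_{p\ge0}$ are dense in $L^2(\R,\nu)$ by the Weierstrass theorem. Thus it suffices to show that every such generator lies in the closed linear span $V$ of $\{P\Omega\}$.

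The key device is a grading of $\mathcal F$ by \emph{weight}, where $\chi_{I_1}(t_1)\cdots\chi_{I_n}(t_n)\,s_1^{p_1}\cdots s_n^{p_n}$ is assigned weight $n+\sum_j p_j$, i.e. the Fock degree plus the total degree in the variables $s_j$. Writing $\la\omega,h\ra=a^+(h)+a^0(h)+a^-(h)$ and expanding a product of $w$ field operators into the $3^w$ words in the letters $a^+,a^0,a^-$, one checks that a word containing $a$ annihilations has weight at most $w-2a$: each annihilation lowers the Fock degree by one and simultaneously destroys the $s$-loading of the slot it removes. Hence, \emph{modulo vectors of weight $\le w-1$}, a product of $w$ field operators applied to $\Omega$ coincides with the corresponding \emph{reduced} product $b(h_1)\cdots b(h_w)\Omega$, where $b(h):=a^+(h)+a^0(h)$ contains no annihilation. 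This is the step that removes the annihilation operator from the analysis.

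With this in hand I would argue by induction on the weight $w$ that $\overline V$ contains every vector of weight $\le w$. The inductive step, in which one must produce each weight-$w$ generator modulo weight $\le w-1$ (already available by the outer hypothesis), is handled by a \emph{downward} induction on the Fock degree $n$, from $n=w$ down to $n=1$. To realise the generator $\chi_{I_1}(t_1)\cdots\chi_{I_n}(t_n)\,s_1^{p_1}\cdots s_n^{p_n}$ of degree $n$ and weight $w=n+\sum_j p_j$, I would apply $\prod_{j=1}^n\la\omega,\chi_{I_j}\ra^{p_j+1}$ with $I_1>\dots>I_n$ disjoint and pass to its reduced form. Here the disjointness is decisive: since $a^0(\chi_{I_j})$ multiplies the current top slot by $\chi_{I_j}(t_1)s_1$, and that slot either is absent (the vacuum, on which $a^0$ vanishes) or carries a time coordinate lying in one of the other, disjoint intervals, every block $\la\omega,\chi_{I_j}\ra^{p_j+1}$ must contribute at least one creation for a word to survive. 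Consequently the reduced product has \emph{no} components of Fock degree below $n$; its degree-$n$ component is exactly the desired generator, while its components of degree $>n$ are weight-$w$ generators of higher Fock degree, which are already in $\overline V$ by the downward induction (after writing each box with a repeated interval as a limit of ordered disjoint boxes). Subtracting these isolates the sought generator, and letting $I_1,\dots,I_n$ and $p_1,\dots,p_n$ vary exhausts the degree-$n$, weight-$w$ part of $\mathcal F$; the base case $n=w$ is the pure creation product, which by the same argument is already a clean single-degree vector.

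The main obstacle is precisely the indecomposability of the field operator: one has access only to $\la\omega,h\ra=a^+(h)+a^0(h)+a^-(h)$ and never to $a^0(h)$ alone, yet it is the neutral part that injects the dependence on the variables $s_j$ needed to fill $\mathcal F$. A short count shows that any product producing $s$-dependence at Fock degree $n$ must involve more than $n$ field operators and hence unavoidably carries components of Fock degree $>n$; the whole difficulty is to disentangle these. The weight filtration (which retires $a^-$) together with the disjoint-interval cancellation (which retires all degrees $<n$) reduces the disentangling to a single downward subtraction, and the point that must be checked most carefully is the vanishing of the sub-$n$ components — that is, that every block is forced to spend a creation. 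Once density is established, $I$ is an isometry with dense range, hence a surjective isometry of Hilbert spaces, i.e. unitary, which is the assertion of the theorem.
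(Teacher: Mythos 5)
Your proof is correct, and its core is the same as the paper's: the paper's Lemma \ref{dyrr968} works with exactly your total set (monotone products $(\chi_{\Delta_1}\otimes s^{l_1})\rhd\dotsm\rhd(\chi_{\Delta_i}\otimes s^{l_i})$ with $\Delta_1>\dots>\Delta_i$ disjoint), exactly your weight (the quantity $l_1+\dots+l_i+i$), and the same two mechanisms: disjointness of the intervals to kill unwanted neutral and annihilation terms, and subdivision of a repeated interval into small disjoint pieces (the diagonal having measure zero) to approximate repeated-interval vectors. Where you differ is the combinatorial organization. The paper runs a single induction on weight, producing the generator with top $s$-exponent $l_1$ by applying one field operator $\la\omega,\chi_{\Delta_1}\ra$ to the generator with exponent $l_1-1$ and explicitly subtracting the two correction terms (the $a^+$ term handled by the subdivision trick, the $a^-$ term dropping in weight and absorbed by the induction hypothesis). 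You instead expand the whole product $\prod_j\la\omega,\chi_{I_j}\ra^{p_j+1}$ at once, discard every word containing an annihilation via the weight bound $w-2a$, and disentangle the surviving creation/neutral words by a downward induction on Fock degree, using that each block is forced to spend its first operator on a creation. Both are valid; the paper's one-operator-at-a-time scheme keeps each step a short two-term subtraction, while your reduced-product scheme makes transparent which generator a given product of fields produces, at the price of the nested (outer weight, inner degree) induction. Two minor points you should repair when writing this up: the indicators of ordered disjoint boxes are total in $L^2(T_n,dt_1\dotsm dt_n)$, not in $L^2(T_n,m_n)$ --- the measure $m_n$ belongs to the extended Fock space $\mathbb F$, not to $\mathcal F^{(n)}=L^2(S_n,dt_1\,d\nu(s_1)\dotsm dt_n\,d\nu(s_n))$; and the assertion that a word with $a$ annihilations has weight at most $w-2a$ deserves the explicit (routine) verification that $a^+(h)$, $a^0(h)$ raise the weight filtration by at most one while $a^-(h)$ lowers it by at least one, which is the analogue of the paper's inclusion $\mathcal X^{(n)}\subset\mathcal Z^{(n)}$.
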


Note that the operator of left multiplication by $\la \omega,h\ra$ in $L^2(\tau)$, which we denoted by $\la \omega,h\ra$, is equal to $I^{-1}\la\omega,h\ra I$, in the latter expression the operator $\la\omega,h\ra$ acting in $\mathcal F$.

For any $(h_1,\dots,h_n)\in B_0(\R_+)^n$, the function 
$$(h_1\otimes\dots\otimes h_n)(t_1,\dots,t_n)=h(t_1)\dotsm h(t_n)$$
 belongs to $B_0(\R_+^n)$.
As easily seen, we can extend the mapping
$$B_0(\R_+)^n\ni (h_1,\dots,h_n)\mapsto \la \omega, h_1\ra\dotsm \la \omega, h_n\ra =:\la \omega^{\otimes n}, h_1\otimes\dots\otimes h_n\ra
\in\mathcal L(\mathcal F)
$$
 by linearity and strong continuity  to a mapping
$$ B_0(\R_+^n)\ni f^{(n)}\mapsto \la \omega^{\otimes n},f^{(n)}\ra \in \mathcal L(\mathcal F).$$
Furthermore, we  may think of $\la\omega^{\otimes n},f^{(n)}\ra $ as the element of $L^2(\tau)$ defined by\linebreak  $I^{-1}\la\omega^{\otimes n},f^{(n)}\ra \Omega$.

We will call $\la\omega^{\otimes n},f^{(n)}\ra $
 a  {\it monomial of $\omega$ of order $n$}.  Sums of such operators and (real) constants form the space $\mathbf P$ of {\it polynomials of  $\omega$}.  Since $\mathbf A\subset\mathbf P$, $\mathbf P$ is dense in $L^2(\tau)$.

It is now standard to introduce orthogonal polynomials.  Indeed, we denote by $\mathbf {P}^{(n)}$ the linear space of all polynomials of $\omega$  of order $\le n$.
Let $\overline{\mathbf{P}^{(n)}}$ denote the closure of $\mathbf {P}^{(n)}$ in $L^2(\tau)$, and let $\mathbf{S}^{(n)}:=\overline{\mathbf{P}^{(n)}}\ominus\overline{\mathbf{P}^{(n-1)}}$. 
We thus  get
$L^2(\tau)=\bigoplus_{n=0}^\infty \mathbf{S}^{(n)}$. 

Let $f^{(n)}\in B_0(\R_+^n)$. We denote the orthogonal projection of the monomial $\la \omega^{\otimes n},f^{(n)}\ra$ onto  $\mathbf{S}^{(n)}$ by $\la P^{(n)}(\omega),f^{(n)}\ra$. We define the subspace $\mathbf{OP}$ of $L^2(\tau)$ as the linear span of the identity operator and the orthogonal polynomials $\la P^{(n)}(\omega),f^{(n)}\ra$ with $f^{(n)}\in B_0(\R_+^n)$ (the space of {\it orthogonal polynomials of $\omega$}). 
One can easily check that $\mathbf{OP}$ is dense in  $L^2(\tau)$.

Our next aim is to calculate the $L^2(\tau)$-norm of $\la P^{(n)}(\omega),f^{(n)}\ra$ for $f^{(n)}\in B_0(\R_+^n)$. 
Let $(p_k)_{k=0}^\infty$ denote the system of monic orthogonal polynomials in $L^2(\mathbb R,\nu)$. (If the support of $\nu$ is finite and consists of $N$ points, we set $p_k:=0$ for $k\ge N$.) Hence, $(p_k)_{k=0}^\infty$ satisfy  the recursion formula
\begin{equation}\label{hdtrss}sp_k(s)=p_{k+1}(s)+b_kp_{k}(s)+a_kp_{k-1}(s),\quad k\in\mathbb N_0,\end{equation}
with $p_{-1}(s):=0$, $a_k>0$, and $b_k\in\mathbb R$. (If the support of $\nu$ has $N$ points, $a_k=0$ for $k\ge N$.)
We define
\begin{equation*}
c_k:=\int_{\mathbb R}p_{k-1}(s)^2\,\nu(ds)=a_0a_1\dotsm a_{k-1},\quad k\in\mathbb N,\end{equation*}
where $a_0:=1$.
Note that $c_1=1$ and $c_k=0$ for $k\ge2$
 if and only if the measure $\nu$ is concentrated at one point.

 We denote by $M$ the set of all multi-indices of the form $(l_1,\dots,l_i)\in\mathbb N_0^i$, $i\in\mathbb N$.
 Fix any  $n\in \mathbb N$.  For each $(l_1,\dots,l_i)\in M$, $l_1+\dots+l_i+i=n$, we denote
\begin{multline*}
T^{(l_1,\dots,l_i)}:=\big\{
(t_1,\dots,t_n)\in \R_+^n\mid t_1=t_2=\dots=t_{l_1+1}>\\ >t_{l_1+2}=t_{l_1+3}=\dots=t_{l_1+l_2+2}>\dots>
t_{l_1+l_2+\dots+l_{i-1}+i}=t_{l_1+l_2+\dots+l_{i-1}+i+1}=\dots=t_n
\big\}.
\end{multline*}
The sets $T^{(l_1,\dots,l_i)}$ with $(l_1,\dots,l_i)\in M$, $l_1+\dots+l_i+i=n$, form a set partition of $T_n$.
Consider the bijection
\begin{equation}\label{tydryde}
T^{(l_1,\dots,l_i)}\ni (t_1,\dots,t_n)\mapsto (t_{l_1+1}, t_{l_1+l_2+2},t_{l_1+l_2+l_3+3},\dots, t_n)\in \widetilde T_i,\end{equation}
where $\widetilde T_i:=\{(t_1,\dots,t_i)\in\R_+^i\mid t_1>t_2>\dots>t_i\}$.
Note that the set $T_i\setminus\widetilde T_i$ is of null Lebesgue measure.
We denote by $m^{(l_1,\dots,l_i)}$ the pre-image of the  measure $c_{l_1}\dotsm c_{l_i}dt_1\dotsm dt_i$ on $T_i$ under the mapping \eqref{tydryde}.  We then extend $m^{(l_1,\dots,l_i)}$ by zero to the whole space $T_n$.   We  define a measure $m_n$ on $T_n$ by 
$$ m_n:=\sum_{(l_1,\dots,l_i)\in M,\ l_1+\dots+l_i+i=n}m^{(l_1,\dots,l_i)}.$$

\begin{theorem}\label{f754o} For any $f^{(n)},g^{(n)}\in B_0(\R_+^n)$, $n\in\N$, we have
$$\big(\la P^{(n)}(\omega),f^{(n)}\ra,\la P^{(n)}(\omega),g^{(n)}\ra \big)_{L^2(\tau)} =(f^{(n)},g^{(n)})_{L^2(T_n,m_n)}.$$
\end{theorem}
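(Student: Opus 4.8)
The plan is to transport the entire computation to the monotone Fock space $\mathcal F$ by means of the unitary operator $I$ of Theorem~\ref{yur7}, so that
$\big(\la P^{(n)}(\omega),f^{(n)}\ra,\la P^{(n)}(\omega),g^{(n)}\ra\big)_{L^2(\tau)}=\big(I\la P^{(n)}(\omega),f^{(n)}\ra,I\la P^{(n)}(\omega),g^{(n)}\ra\big)_{\mathcal F}$, where $I\la P^{(n)}(\omega),f^{(n)}\ra$ is the orthogonal projection of $\la\omega^{\otimes n},f^{(n)}\ra\Omega$ onto $I(\mathbf S^{(n)})$. First I would equip $\mathcal F$ with a grading adapted to $\nu$: expanding each $s$-variable of $\mathcal F^{(k)}=L^2(S_k)$ in the orthonormal basis $\{p_l/\sqrt{c_{l+1}}\}$ of $L^2(\R,\nu)$ gives an orthogonal decomposition $\mathcal F^{(k)}=\bigoplus_{(l_1,\dots,l_k)\in\N_0^k}\mathcal F^{(k)}_{(l_1,\dots,l_k)}$, a generic component being $g(t_1,\dots,t_k)\,p_{l_1}(s_1)\cdots p_{l_k}(s_k)$. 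Call $N:=k+l_1+\dots+l_k$ its \emph{weight}, and let $\mathcal G_N$ be the closed span of all components of weight $N$, so that $\mathcal F=\bigoplus_{N\ge0}\mathcal G_N$ orthogonally. The theorem will follow once I establish that $I(\mathbf S^{(n)})=\mathcal G_n$ and identify the weight-$n$ part of $\la\omega^{\otimes n},f^{(n)}\ra\Omega$ explicitly.

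The heart of the argument is this explicit description. For an elementary tensor $f^{(n)}=h_1\otimes\dots\otimes h_n$ the vector is $\la\omega,h_1\ra\cdots\la\omega,h_n\ra\Omega$, and expanding each $\la\omega,h_j\ra=a^+(h_j)+a^0(h_j)+a^-(h_j)$ writes it as a sum over words $w\in\{+,0,-\}^n$ of the vectors $a^{w_1}(h_1)\cdots a^{w_n}(h_n)\Omega$, the rightmost factor acting on $\Omega$ first. I would track how each letter changes the weight: $a^+$ always raises it by $1$; by the recursion \eqref{hdtrss} applied to $s\,p_l$, the operator $a^0$ raises, preserves, or lowers it by $1$; and since $\int_\R p_l\,d\nu=\delta_{l,0}$, a nonzero $a^-$ lowers it by $1$. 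Hence the maximal weight produced by $n$ letters is $n$, and it is attained exactly by the words with no $a^-$ and with every $a^0$ taking the weight-raising branch. A short combinatorial check matches these words with the compositions $(r_1,\dots,r_i)\models n$ (ordered tuples of positive integers summing to $n$): the word $(0^{\,r_1-1}+)(0^{\,r_2-1}+)\cdots(0^{\,r_i-1}+)$ builds $i$ successive runs, the $m$-th run collapsing the $r_m$ consecutive arguments of $f^{(n)}$ to a single time $t_m$ (with $t_1>\dots>t_i$) and contributing the factor $p_{r_m-1}(s_m)$. Consequently the weight-$n$ part of $\la\omega^{\otimes n},f^{(n)}\ra\Omega$ is $\sum_{(r_1,\dots,r_i)\models n}D_{(r_1,\dots,r_i)}f^{(n)}$ with $D_{(r_1,\dots,r_i)}f^{(n)}\in\mathcal F^{(i)}$ given by
\[
\big(D_{(r_1,\dots,r_i)}f^{(n)}\big)(t_1,s_1,\dots,t_i,s_i)=f^{(n)}\big(\underbrace{t_1,\dots,t_1}_{r_1},\dots,\underbrace{t_i,\dots,t_i}_{r_i}\big)\,p_{r_1-1}(s_1)\cdots p_{r_i-1}(s_i),
\]
and by linearity and strong continuity this persists for arbitrary $f^{(n)}\in B_0(\R_+^n)$. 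Getting this formula right --- in particular the run and time-ordering bookkeeping forced by the monotone structure --- is the step I expect to be the main obstacle.

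With the structural formula in hand the identification of the spaces is routine. The weight bound gives $I(\overline{\mathbf P^{(m)}})\subseteq\bigoplus_{N\le m}\mathcal G_N$ for every $m$. Conversely, arguing by induction on $m$ and subtracting the lower-order part (which lies in $I(\overline{\mathbf P^{(m-1)}})$ by the inductive hypothesis), the vectors $\sum_{\mathrm{comp}}D_{\mathrm{comp}}f^{(m)}$ all lie in $I(\overline{\mathbf P^{(m)}})$; since the strata $T^{(l_1,\dots,l_i)}$ partition $T_m$, the restrictions of $f^{(m)}$ to the different strata may be prescribed independently, so these vectors are dense in $\mathcal G_m$. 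Hence $I(\overline{\mathbf P^{(m)}})=\bigoplus_{N\le m}\mathcal G_N$ and therefore $I(\mathbf S^{(n)})=\mathcal G_n$. As $\la\omega^{\otimes n},f^{(n)}\ra\Omega$ has weight $\le n$ and the $\mathcal G_N$ are mutually orthogonal, its projection onto $\mathcal G_n$ is precisely its weight-$n$ part, i.e.\ $I\la P^{(n)}(\omega),f^{(n)}\ra=\sum_{(r_1,\dots,r_i)\models n}D_{(r_1,\dots,r_i)}f^{(n)}$.

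Finally I would compute the inner product from this formula. Contributions from different compositions are orthogonal in $\mathcal F$: either they sit in different $\mathcal F^{(i)}$, or they carry different factors $\prod_m p_{r_m-1}(s_m)$ and are separated by $\int_\R p_jp_k\,d\nu=\delta_{j,k}c_{k+1}$. For a single composition, integrating out the $s$-variables yields the factor $\prod_{m=1}^i\int_\R p_{r_m-1}^2\,d\nu=\prod_{m=1}^i c_{r_m}$ and leaves an integral over $\widetilde T_i$ of the diagonal restrictions of $f^{(n)}$ and $g^{(n)}$. Summing over all compositions of $n$ and comparing with the construction of $m_n$ --- the stratum $T^{(l_1,\dots,l_i)}$ with $l_m=r_m-1$ being carried to $\widetilde T_i$ by \eqref{tydryde} with the corresponding product weight --- identifies the sum with $(f^{(n)},g^{(n)})_{L^2(T_n,m_n)}$, which proves the theorem. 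In passing this shows $\la P^{(n)}(\omega),f^{(n)}\ra$ depends only on $f^{(n)}|_{T_n}$, in agreement with the stated properties.
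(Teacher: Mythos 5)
Your proposal is correct and takes essentially the same route as the paper's own proof: your weight grading $\mathcal G_N$ is exactly the paper's decomposition $\mathcal H^{(N)}=\bigoplus_{l_1+\dots+l_i+i=N}\mathcal H_{l_1,\dots,l_i}$ (Lemma~\ref{yut9p} and \eqref{crte6ei6}), your raising/preserving/lowering splitting of $a^0$ via the recursion \eqref{hdtrss} is the paper's $a^0=a^{0+}+a^{00}+a^{0-}$, and your composition formula for the top-weight part of $\la\omega^{\otimes n},f^{(n)}\ra\Omega$ is precisely Lemma~\ref{r57r9} (your $(r_1,\dots,r_i)$ being $(l_1+1,\dots,l_i+1)$), after which both arguments conclude with the same norm computation against the construction of $m_n$. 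The only cosmetic differences are that the paper works with elementary tensors $h_1\otimes\dots\otimes h_n$ and obtains the identification $I\big(\overline{\mathbf P^{(n)}}\big)=\bigoplus_{i\le n}\mathcal H^{(i)}$ through the explicit inductive construction of Lemma~\ref{dyrr968}, whereas you get the density in $\mathcal G_n$ by prescribing $f^{(n)}$ independently on the strata $T^{(l_1,\dots,l_i)}$.
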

 
 We define a Hilbert space
$\mathbb F$ by \eqref{rtei786807}.
We will preserve the notation $\Omega$ for the vacuum vector $(1,0,0,\dots)\in\mathbb F$. 

Note that the space $L^2(T_n,dt_1\dotsm dt_n)$ can be identified with the subspace\linebreak $L^2(T^{(0,\dots,0)},m_n)$ of $L^2(T_n,m_n)$.
 Hence, the space $\mathbb F$ contains the subspace $$\R\oplus\bigoplus _{n=1}^\infty L^2(T_n,dt_1\dotsm dt_n),$$ which is a monotone Fock space. So it is natural to call $\mathbb F$ an {\it extended monotone Fock space}.

Note that Theorem~\ref{f754o} implies, in particular, that $\la P^{(n)}(\omega),f^{(n)}\ra\in L^2(\tau)$ is completely determined by the restriction of the function $f^{(n)}\in B_0(\R_+^n)$ to $T_n$, i.e., by $f^{(n)}\in B_0(T_n)$. Note, however, that such a  statement does not hold, in general,  for the monomials $\la \omega^{\otimes n},f^{(n)}\ra$ in $L^2(\tau)$.

Clearly, the set $B_0(T_n)$ is dense in $L^2(T_n,m_n)$. Hence, by Theorem~\ref{f754o}, the mapping 
\begin{equation}\label{yd7i5ei7}
J^{(n)} \la P^{(n)}(\omega),f^{(n)}\ra:=f^{(n)}\in L^2(T_n,m_n)\end{equation}
can be extended by continuity to a unitary operator $J^{(n)}:\mathbf{S}^{(n)}\to L^2(T_n,m_n)$. 
 We can now define a unitary operator $J:L^2(\tau)\to\mathbb F$ whose restriction to each  $\mathbf{S}^{(n)}$ is equal to $J^{(n)}$. (Here $J^{(0)}$ is the identity operator in $\R$.)
Note that we get the following diagram of the unitary operators:
\begin{equation}\label{cdrte64i}
\mathcal F \xleftarrow{I} L^2(\tau)\xrightarrow{J} \mathbb F.\end{equation}

 Again with an abuse of notations, we will  denote by  $\la\omega,h\ra$ the operator $J\la\omega,h\ra J^{-1}$ in $\mathbb F$.

\begin{theorem}\label{yd7i649} For each $h\in B_0(\R_+)$,  consider $\la\omega,h\ra$ as a continuous linear operator acting in $\mathbb F$. Then
$$\la\omega,h\ra=A^+(h)+B^0(h)+B^-(h).$$
Here, $A^+(h)$ is a creation operator: $A^+(h)\Omega=h$ and for $f^{(n)}\in L^2(T_n,m_n)$, $A^+(h)f^{(n)}\in L^2(T_{n+1}m_{n+1})$, 
$$\big(A^+(h)f^{(n)}\big)(t_1,\dots,t_{n+1})=h(t_1)f^{(n)}(t_2,\dots,t_{n+1}),$$
$B^0(h)$ is a neutral operator: $B^0(h)\Omega=0$, for each $f^{(n)}\in L^2(T_n,m_n)$ we have $B^0(h)f^{(n)}\in  L^2(T_n,m_n)$ and for each $(l_1,\dots,l_i)\in M$, $l_1+\dots+l_i+i=n$ and $(t_1,\dots,t_n)\in T^{(l_1,\dots,l_i)}$,
$$ \big(B^0(h)f^{(n)}\big)(t_1,\dots,t_n) =b_{l_1}h(t_1)f^{(n)}(t_1,\dots,t_n),$$
and $B^-(h)$ is an annihilation operator: $B^-(h)\Omega=0$, for $f^{(1)}\in L^2(T_1,m_1)=L^2(\R_+,dt)$
$$B^-(h)f^{(1)}=\int_{\R_+}h(u)f^{(1)}(u)\,du\,\Omega,$$
and for $n\ge2$ and $f^{(n)}\in L^2(T^n,m_n)$, we have $B^-(h)f^{(n)}\in L^2(T_{n-1},m_{n-1})$, and for each $(l_1,\dots,l_i)\in M$, $l_1+\dots+l_i+i=n-1$ and $(t_1,\dots,t_{n-1})\in T^{(l_1,\dots,l_i)}$,
\begin{multline*}
\big(B^-(h)f^{(n)}\big)(t_1,\dots,t_{n-1})\\
=\int_{t_1}^\infty 
h(u)f^{(n)}(u,t_1,\dots,t_{n-1})\,du+a_{l_1+1}h(t_1)f^{(n)}(t_1,t_1,t_2,\dots,t_{n-1}).\end{multline*}
\end{theorem}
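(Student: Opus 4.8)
The plan is to transport the known representation $\la\omega,h\ra=a^+(h)+a^0(h)+a^-(h)$ on the monotone Fock space $\mathcal F$ over to the extended Fock space $\mathbb F$ through the composition $U:=JI^{-1}\colon\mathcal F\to\mathbb F$, since by the diagram \eqref{cdrte64i} the operator $\la\omega,h\ra$ acting in $\mathbb F$ equals $U\big(a^+(h)+a^0(h)+a^-(h)\big)U^{-1}$. The key input is the explicit form of the vector $I\la P^{(n)}(\omega),f^{(n)}\ra=\la P^{(n)}(\omega),f^{(n)}\ra\Omega$ that underlies the norm computation in Theorem~\ref{f754o}: for each cell $T^{(l_1,\dots,l_i)}$ of the partition of $T_n$, it contributes a summand in $\mathcal F^{(i)}$, supported on $\{t_1>\dots>t_i\}$, of the product form $\tilde f(t_1,\dots,t_i)\,p_{l_1}(s_1)\dotsm p_{l_i}(s_i)$, where $\tilde f$ is $f^{(n)}$ read off the diagonal via the bijection \eqref{tydryde} and $(p_k)$ are the monic orthogonal polynomials of $\nu$; in other words, a cluster of $l_j+1$ coinciding times carries the factor $p_{l_j}$. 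Thus $U$ sends such a product summand to the restriction of $\tilde f$ to $T^{(l_1,\dots,l_i)}$ inside $L^2(T_n,m_n)$, and it suffices to apply $a^+(h)+a^0(h)+a^-(h)$ to this expansion and re-read the result through $U$.

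Before computing I would record the abstract band structure. Since each $\la\omega,h\ra$ is self-adjoint and raises the polynomial order by at most one, the standard argument shows $\la\omega,h\ra\,\mathbf S^{(n)}\subset\mathbf S^{(n+1)}\oplus\mathbf S^{(n)}\oplus\mathbf S^{(n-1)}$, and the three blocks are by definition $A^+(h)$, $B^0(h)$ and $B^-(h)$. The creation block is then immediate: because $\la\omega,h\ra\la\omega^{\otimes n},f^{(n)}\ra=\la\omega^{\otimes(n+1)},h\otimes f^{(n)}\ra$ while the lower-order corrections to $\la P^{(n)}(\omega),f^{(n)}\ra$ contribute only to orders $\le n$, the $\mathbf S^{(n+1)}$-component of $\la\omega,h\ra\la P^{(n)}(\omega),f^{(n)}\ra$ is exactly $\la P^{(n+1)}(\omega),h\otimes f^{(n)}\ra$. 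Applying $J$ and restricting to $T_{n+1}$ gives $\big(A^+(h)f^{(n)}\big)(t_1,\dots,t_{n+1})=h(t_1)f^{(n)}(t_2,\dots,t_{n+1})$.

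For the neutral and annihilation blocks I would feed the diagonal expansion into the three operators. Applying $a^+(h)$ to a summand prepends a new size-one cluster carrying $p_0\equiv1$, producing an order-$(n+1)$ term; applying $a^0(h)$ multiplies the leading factor $p_{l_1}(s_1)$ by $h(t_1)s_1$, whereupon the recursion \eqref{hdtrss}, $s\,p_{l_1}=p_{l_1+1}+b_{l_1}p_{l_1}+a_{l_1}p_{l_1-1}$, splits the summand into an order-$(n+1)$ piece (the $p_{l_1+1}$ term, extending the leading cluster), an order-$n$ piece (the $b_{l_1}p_{l_1}$ term), and an order-$(n-1)$ piece (the $a_{l_1}p_{l_1-1}$ term); applying $a^-(h)$ integrates out the leading variable and, because $\int_\R p_{l_1}(v)\,d\nu(v)=\delta_{l_1,0}$, survives only when the leading cluster has size one, contracting it and lowering the order. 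Collecting by order and pushing through $U$, the order-$n$ contributions combine to $\big(B^0(h)f^{(n)}\big)=b_{l_1}h(t_1)f^{(n)}$ on each $T^{(l_1,\dots,l_i)}$, while the order-$(n-1)$ contributions — the $a^-$ contraction, giving the integral term, together with the $a^0$-lowering term, which carries the coefficient $a_{l_1+1}$ after the leading index is relabelled — combine to the stated formula for $B^-(h)$; the two order-$(n+1)$ contributions merge into the single creation operator $A^+(h)$ already identified, which serves as a consistency check.

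The main obstacle is the bookkeeping of this redistribution rather than any single hard estimate. One must verify that the ranges of validity match up — the $a^-$ term requires a size-one leading cluster ($l_1=0$) whereas the $a^0$-lowering term requires $l_1\ge1$, and under the shift $l_1\mapsto l_1+1$ of the leading index they recombine into precisely the two summands of $B^-(h)$ — and that the strict versus non-strict time orderings on the diagonal cells are handled correctly when the new time either opens a fresh cluster or coincides with $t_1$. It is also necessary to make the formal operators $\di_t^\dag,\di_t$ and the unbounded annihilation and neutral operators rigorous on a dense domain and to justify the termwise manipulations and their convergence in $\mathbb F$. Once the correspondence ``cluster of size $l_j+1\leftrightarrow$ polynomial $p_{l_j}$'' and the recursion \eqref{hdtrss} are in hand, the identification of $A^+(h)$, $B^0(h)$ and $B^-(h)$ is forced.
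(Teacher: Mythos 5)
Your proposal is correct and takes essentially the same route as the paper: the paper likewise splits the neutral operator via the recursion \eqref{hdtrss} into raising, neutral and lowering parts $a^{0+}(h),a^{00}(h),a^{0-}(h)$, regroups $\la\omega,h\ra=\alpha^+(h)+\alpha^0(h)+\alpha^-(h)$ according to the grading $\mathcal F=\bigoplus_n\mathcal H^{(n)}$, and conjugates by $U=JI^{-1}$ using the explicit ``cluster of $l_j+1$ coinciding times carries the factor $p_{l_j}$'' description of $U$ (Corollary~\ref{utr8o6reds}), which is exactly your bookkeeping. The only differences are cosmetic: your preliminary band-structure remark, and your concern about unboundedness, which is unnecessary since all operators involved ($a^{\pm}(h)$, $a^0(h)$, hence $A^+(h)$, $B^0(h)$, $B^-(h)$) are bounded here because $\nu$ has compact support and $h\in B_0(\R_+)$.
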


Let $C_0(\R_+^n)$ denote the space of all continuous functions on $\R_+^n$ with compact support. Obviously, $C_0(\R_+^n)\subset B_0(\R_+^n)$. If a polynomial
$P=f^{(0)}+\sum_{i=1}^n\la\omega^{\otimes i},f^{(i)}\ra\in\mathbf{P}$ is such that $f^{(i)}\in C_0(\R_+^n)$ for all $i=1,\dots,n$, we call $P$ a {\it a continuous polynomial of $\omega$}. We denote by $\mathbf{CP}$ the space of all continuous polynomials of $\omega$, and by $\mathbf{CP}^{(n)}$ the space of all continuous polynomials of $\omega$ of order $\le n$.
 It follows immediately from the proof of Theorem \ref{yur7} that 
 $\mathbf{CP}^{(n)}$ is dense in $\overline{\mathbf{P}^{(n)}}$. Hence, $\mathbf{CP}$ is dense in $L^2(\tau)$. 
 
 Further we define the subspace $\mathbf{OCP}$ of $L^2(\tau)$ as the linear span of the identity operator and the orthogonal polynomials $\la P^{(n)}(\omega),f^{(n)}\ra$ with $f^{(n)}\in C_0(T_n)$ (the space of {\it orthogonal  polynomials with continuous coefficients}).  Clearly, $\mathbf{OCP}$ is dense in $L^2(\tau)$.
 
 Thus, we have constructed the subspaces $\mathbf{CP}$ and $\mathbf{OCP}$ of $L^2(\tau)$. We will say that the orthogonal polynomials of monotone L\'evy noise $\omega$  belong to the {\it Meixner class} if $\mathbf{CP}=\mathbf{OCP}$.

\begin{theorem}\label{dre6i4e}
The orthogonal  polynomials of monotone L\'evy noise $\omega$  belong to the  Meixner class if and only if there exist $\lambda\in\R$ and $\eta\ge0$ such that, in formula \eqref{hdtrss}, $b_k=\lambda$ for all $k\in\N_0$ and $a_k=\eta$ for all $k\in\N$. In the latter case, for each $h\in B_0(\R_+)$, the operator $\la\omega,h\ra$ in $\mathbb F$ has the following representation:
\begin{equation}\label{cjtre57k89}
 \la\omega,h\ra= A^+(h)+\lambda A^0(h)+A_1^-(h)+\eta A_2^-(h).\end{equation}
Here, $A^+(h)$ is the creation operator
defined in Theorem~\ref{yd7i649}, 
$A^0(h)$ is a neutral operator: $A^0(h)\Omega=0$, and for $f^{(n)}\in L^2(T_n,m_n)$, $A^0(h)f^{(n)}\in L^2(T_n,m_n)$,
$$\big(A^0(h)f^{(n)}\big)(t_1,\dots,t_n)=h(t_1)f^{(n)}(t_1,\dots,t_n),$$
$A^-_1(h)$ is an annihilation operator of the first kind: $A_1^-(h)\Omega=0$ and 
for $f^{(n)}\in L^2(T_n,m_n)$, $A_1^-(h)f^{(n)}\in L^2(T_{n-1},m_{n-1})$ (for $n=1$ the latter space being $\R$),
$$\big(A_1^-(h)f^{(n)}\big)(t_1,\dots,t_{n-1})=\int_{t_1}^\infty 
h(u)f^{(n)}(u,t_1,\dots,t_{n-1})\,du,$$
and $A_2^-(h)$ is an annihilation operator of the second kind: 
$A_2^-(h)=0$ on $\mathbb R\oplus L^2(\R_+,dt)$, and for each $f^{(n)}\in L^2(T_n,m_n)$, $n\ge2$, $A_2^-(h)\in L^2(T_{n-1},m_{n-1})$,
$$\big(A_2^-(h)f^{(n)}\big)(t_1,\dots,t_{n-1})=h(t_1)f^{(n)}(t_1,t_1,t_2,t_3,\dots,t_{n-1}).$$
\end{theorem}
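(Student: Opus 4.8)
The plan is to carry out the whole argument on the extended monotone Fock space $\mathbb F$ through the unitary $J$, since $\mathbf{CP}=\mathbf{OCP}$ is equivalent to $J\mathbf{CP}=J\mathbf{OCP}$. First I would record that the representation \eqref{cjtre57k89} is an immediate consequence of Theorem~\ref{yd7i649}: if $b_k=\lambda$ for all $k$ then the formula for the neutral part gives $B^0(h)=\lambda A^0(h)$, and if $a_k=\eta$ for all $k\ge1$ then the formula for the annihilation part gives $B^-(h)=A_1^-(h)+\eta A_2^-(h)$, while $A^+(h)$ is unchanged; so the entire content of the theorem is the stated equivalence. Throughout I write $W(h):=A^+(h)+B^0(h)+B^-(h)=J\la\omega,h\ra J^{-1}$. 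Under $J$ the constant polynomial goes to $\Omega$, so by \eqref{yd7i5ei7} the monomial $\la\omega^{\otimes n},h_1\otimes\dots\otimes h_n\ra$ goes to $W(h_1)\dotsm W(h_n)\Omega$; hence $J\mathbf{CP}$ is the linear span of such vectors with $h_i\in C_0(\R_+)$ (and, after the density step below, with arbitrary $C_0(\R_+^n)$ coefficients), while by Theorem~\ref{f754o} the space $J\mathbf{OCP}$ is the algebraic direct sum $\bigoplus_n C_0(T_n)$ of ``pure'' continuous components, each sitting in a single slot.

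For the direction \emph{Meixner $\Rightarrow$ constant coefficients}, I would use that left multiplication by $\la\omega,h\ra$ with $h\in C_0(\R_+)$ visibly preserves $\mathbf{CP}$, because $\la\omega,h\ra\la\omega^{\otimes n},g^{(n)}\ra=\la\omega^{\otimes(n+1)},h\otimes g^{(n)}\ra$ and $h\otimes g^{(n)}\in C_0(\R_+^{n+1})$. Thus, if $\mathbf{CP}=\mathbf{OCP}$, then $W(h)$ maps $J\mathbf{OCP}$ into itself, i.e.\ it sends every pure continuous component into a finite sum of pure continuous components. Applying this to an arbitrary $f^{(n)}\in C_0(T_n)$ and reading off the order-$n$ and order-$(n-1)$ parts of $W(h)f^{(n)}$, I conclude that $B^0(h)f^{(n)}$ and $B^-(h)f^{(n)}$ must possess continuous representatives on $T_n$ and $T_{n-1}$ respectively. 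The crucial point is that every stratum $T^{(l_1,\dots,l_i)}$ carries strictly positive $m_n$-measure, equal to $c_{l_1}\dotsm c_{l_i}$ times Lebesgue measure in dimension $i$, so the $L^2(T_n,m_n)$-class of a function determines its values up to a null set on each stratum separately. Since on the stratum whose first cluster has size $l_1+1$ the neutral part equals $b_{l_1}h(t_1)f^{(n)}$ and the annihilation part contains the diagonal term $a_{l_1+1}h(t_1)f^{(n)}(t_1,t_1,\dots)$, while the top stratum is dense in $T_n$, a continuous representative evaluated across the closure of the top stratum onto an adjacent lower stratum forces $b_{l_1}=b_{l_1+1}$ and $a_{l_1+1}=a_{l_1+2}$ wherever $h(t_1)f^{(n)}\ne0$; choosing $h,f^{(n)}$ suitably and letting $n$ range over all values yields $b_k\equiv\lambda$ and $a_k\equiv\eta$.

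For the converse I would assume $b_k=\lambda$, $a_k=\eta$ and prove the two inclusions. For $J\mathbf{CP}\subset J\mathbf{OCP}$ it suffices, by the generation of $\mathbf{CP}$ from $\Omega$ by the operators $W(h)$, to check that each of $A^+(h),A^0(h),A_1^-(h),A_2^-(h)$ maps $C_0(T_k)$ continuously into $C_0(T_{k+1})$, $C_0(T_k)$, and $C_0(T_{k-1})$ respectively; with constant coefficients there are no stratum-dependent jumps, so each formula in \eqref{cjtre57k89} manifestly produces a continuous compactly supported function, and an induction on the order together with a sup-norm approximation of a general $g^{(n)}\in C_0(\R_+^n)$ by tensor products disposes of non-product coefficients. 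For the reverse inclusion $J\mathbf{OCP}\subset J\mathbf{CP}$ I would use the triangular structure: given $f^{(n)}\in C_0(T_n)$, any continuous extension $\tilde f^{(n)}\in C_0(\R_+^n)$ satisfies $J\la\omega^{\otimes n},\tilde f^{(n)}\ra=(0,\dots,0,f^{(n)})+r$, where the remainder $r$ is a continuous vector of order $\le n-1$ by the first inclusion; since $r\in J\mathbf{OCP}$ of lower order lies in $J\mathbf{CP}$ by the induction hypothesis, and the monomial lies in $J\mathbf{CP}$ by definition, the pure component $(0,\dots,0,f^{(n)})$ lies in $J\mathbf{CP}$ as well.

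I expect the main obstacle to be the \emph{only if} forcing argument, and specifically the measure-theoretic bookkeeping that makes it rigorous: one must verify that every stratum $T^{(l_1,\dots,l_i)}$ is charged by $m_n$, so that a discontinuity between adjacent strata genuinely obstructs the existence of a continuous representative; identify, for each prescribed index $k$, a pair of adjacent strata across which $b_k$ versus $b_{k+1}$ (resp.\ $a_{k+1}$ versus $a_{k+2}$) appear; and choose the test functions so that $h(t_1)f^{(n)}$ does not vanish on the separating diagonal. The degenerate case in which $\nu$ has finite support, where $c_k$ and $a_k$ vanish from some index on, has to be handled separately since then only finitely many coefficients are constrained. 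The remaining ingredients---the substitution yielding \eqref{cjtre57k89}, the continuity of the four operators, and the triangular induction---are routine once this Fock-space picture is in place.
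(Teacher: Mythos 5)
Your proposal is correct and is essentially the paper's own proof: the representation \eqref{cjtre57k89} is read off from Theorem~\ref{yd7i649}; your ``if'' direction rests on the triangular, continuity-preserving transition between monomials and orthogonal polynomials, which the paper encodes in the coefficient-space recursion operators $R_{i,n}$ (Lemmas~\ref{xe7868or} and~\ref{tr78759p}) and you encode equivalently on the Fock side through $J$; and your ``only if'' direction is exactly the paper's argument, namely that $\mathbf{CP}=\mathbf{OCP}$ is invariant under left multiplication by $\la\omega,h\ra$, so $B^0(h)f^{(n)}$ and $B^-(h)f^{(n)}$ must admit continuous representatives, and comparison across strata of positive $m_n$-measure forces $b_k\equiv\lambda$ and $a_k\equiv\eta$. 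The obstacles you flag (positivity of the stratum measures, choice of test functions, the finite-support degeneracy) are precisely the ones the paper disposes of by the case split $a_k\equiv 0$ versus $a_1>0$ and the induction on $k$ that propagates $a_k=\eta>0$, so that each successive stratum is charged.
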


Following \cite{M1}, for functions $f^{(m)}:T_m\to\R$
and $g^{(n)}:T_n\to\R$, we define their {\it monotone tensor product} $f^{(m)}\rhd g^{(n)}$ as the function from $T_{m+n}$ to $\R$ given  by
$$\big(f^{(m)}\rhd g^{(n)}\big)(t_1,\dots,t_{m+n}):=f^{(m)}(t_1,\dots,t_m)g^{(n)}(t_{m+1},\dots,t_{m+n}).$$
This operation is obviously associative. 

\begin{corollary}\label{t896p}
Consider the system of orthogonal polynomials  from the Mexiner class that  corresponds to parameters  $\lambda\in\R$ and $\eta\ge0$. Then, for each $h,h_1,h_2\in C_0(\R_+)$,
$$\la P^{(1)}(\omega),h\ra=\la\omega,h\ra,\quad \la P^{(2)}(\omega),h_1\rhd h_2\ra=\la\omega^{\otimes 2},h_1\otimes h_2\ra-\lambda\la\omega,h_1h_2\ra-\int_{\R_+}h_1(u)h_2(u)\,du,$$
and for $n\ge2$ and $h_1,\dots,h_n\in C_0(\R_+)$, the following recursion formula holds: 
\begin{align*}
&\la P^{(n)}(\omega),h_1\rhd\dotsm\rhd h_n\ra=\la\omega,h_1\ra \la P^{(n-1)}(\omega),h_2\rhd\dotsm\rhd h_n\ra\\
&\quad-\lambda\la P^{(n-1)}(\omega),(h_1h_2)\rhd h_3\rhd\dotsm\rhd h_n\ra-
\la P^{(n-2)}(\omega), \mathcal I(h_1,h_2,h_3)\rhd h_4\rhd \dotsm\rhd h_n\ra\\
&\quad -\eta\la P^{(n-2)}(\omega),(h_1h_2h_3)\rhd h_4\rhd  \dotsm\rhd h_n\ra,
\end{align*}
where the mapping $\mathcal I:C_0(\R_+)^3\to C_0(\R_+)$ is given by
\begin{equation}\label{su5wsu5}
\big(\mathcal I\mathcal (h_1,h_2,h_3)\big)(t):=\int_t^\infty    h_1(u)h_2(u)\,du\, h_3(t).\end{equation}

\end{corollary}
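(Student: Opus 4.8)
The plan is to transport the whole statement to the extended monotone Fock space $\mathbb F$ via the unitary operator $J$ of \eqref{yd7i5ei7} and to read off the recursion from the explicit action of $\la\omega,h\ra$ on $\mathbb F$ supplied by Theorem~\ref{dre6i4e}. The key bookkeeping observation is that, under $J$, each orthogonal polynomial is sent to its coefficient function, so that $J\la P^{(n)}(\omega),h_1\rhd\dots\rhd h_n\ra=h_1\rhd\dots\rhd h_n\in L^2(T_n,m_n)$, and that in the Meixner case $J\la\omega,h\ra J^{-1}=A^+(h)+\lambda A^0(h)+A_1^-(h)+\eta A_2^-(h)$. For the first base case, $\la P^{(1)}(\omega),h\ra=\la\omega,h\ra$: the monomial $\la\omega,h\ra$ belongs to $\mathbf P^{(1)}$, and its projection onto the constants $\mathbf P^{(0)}$ equals $\tau(\la\omega,h\ra)=(h,\Omega)_{\mathcal F}=0$, so $\la\omega,h\ra$ is already orthogonal to $\mathbf P^{(0)}$ and hence coincides with its own projection onto $\mathbf S^{(1)}$.

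The main step is a single computation for $n\ge2$. I would set $g:=h_2\rhd\dots\rhd h_n$, so that $J\la P^{(n-1)}(\omega),g\ra=g$ with $g(s_1,\dots,s_{n-1})=h_2(s_1)\dotsm h_n(s_{n-1})$ on $T_{n-1}$, and then apply the four operators of Theorem~\ref{dre6i4e} to $g$. The creation operator gives $\big(A^+(h_1)g\big)(t_1,\dots,t_n)=h_1(t_1)g(t_2,\dots,t_n)$, that is $A^+(h_1)g=h_1\rhd\dots\rhd h_n$. The neutral operator merges the first two slots, producing $A^0(h_1)g=(h_1h_2)\rhd h_3\rhd\dots\rhd h_n$. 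For the first–kind annihilation operator one evaluates $g$ at the boundary point $(u,t_1,\dots,t_{n-2})$ and finds
$$\big(A_1^-(h_1)g\big)(t_1,\dots,t_{n-2})=\int_{t_1}^\infty h_1(u)h_2(u)\,du\;h_3(t_1)h_4(t_2)\dotsm h_n(t_{n-2}),$$
which by the definition \eqref{su5wsu5} of $\mathcal I$ is exactly $\mathcal I(h_1,h_2,h_3)\rhd h_4\rhd\dots\rhd h_n$. Finally, evaluating $g$ on the diagonal $(t_1,t_1,t_2,\dots,t_{n-2})$ gives $A_2^-(h_1)g=(h_1h_2h_3)\rhd h_4\rhd\dots\rhd h_n$.

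Assembling the four contributions yields, in $\mathbb F$, an identity for $(J\la\omega,h_1\ra J^{-1})g$; applying $J^{-1}$ and recalling that $\la\omega,h_1\ra$ is the operator of left multiplication $J^{-1}(J\la\omega,h_1\ra J^{-1})J$ in $L^2(\tau)$, this becomes
\begin{multline*}
\la\omega,h_1\ra\la P^{(n-1)}(\omega),g\ra=\la P^{(n)}(\omega),h_1\rhd\dots\rhd h_n\ra+\lambda\la P^{(n-1)}(\omega),(h_1h_2)\rhd h_3\rhd\dots\rhd h_n\ra\\
+\la P^{(n-2)}(\omega),\mathcal I(h_1,h_2,h_3)\rhd h_4\rhd\dots\rhd h_n\ra+\eta\la P^{(n-2)}(\omega),(h_1h_2h_3)\rhd h_4\rhd\dots\rhd h_n\ra,
\end{multline*}
and solving for the top term is precisely the asserted recursion. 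The explicit $n=2$ formula is the degenerate case of the same computation: there $A_1^-(h_1)$ acts on $L^2(\R_+,dt)$ and returns the constant $\int_{\R_+}h_1(u)h_2(u)\,du$ (the $n=1$ clause of $A_1^-$), while $A_2^-$ vanishes on $\mathbb R\oplus L^2(\R_+,dt)$; together with $\la\omega,h_1\ra\la P^{(1)}(\omega),h_2\ra=\la\omega^{\otimes2},h_1\otimes h_2\ra$ and $\la P^{(1)}(\omega),h_1h_2\ra=\la\omega,h_1h_2\ra$ this gives the stated closed form.

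There is no deep obstacle here; the only point demanding care is the correct evaluation of the monotone tensor product $g$ at the boundary argument $(u,t_1,\dots)$ appearing in $A_1^-$ and on the diagonal $(t_1,t_1,t_2,\dots)$ appearing in $A_2^-$. Because $g$ is defined on all of $T_{n-1}$, including its boundary, these evaluations are unambiguous, so the whole argument reduces to unwinding the definitions of the operators in Theorem~\ref{dre6i4e} and tracking indices through the monotone product.
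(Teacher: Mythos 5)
Your proposal is correct: every step checks out, including the two base cases and the evaluation of the four operators of Theorem~\ref{dre6i4e} on the monotone product $g=h_2\rhd\dotsm\rhd h_n$ (creation prepends $h_1$, the neutral operator merges the first two slots into $h_1h_2$, $A_1^-$ produces exactly $\mathcal I(h_1,h_2,h_3)$ from \eqref{su5wsu5}, and $A_2^-$ merges three slots into $h_1h_2h_3$). However, your packaging differs from the paper's. The paper's proof of the corollary is a one-liner: it invokes Lemma~\ref{xe7868or}, which identifies the orthogonal polynomials $\la P^{(n)}(\omega),f^{(n)}\ra$ with the algebraically defined polynomials $\la R^{(n)}(\omega),f^{(n)}\ra$ built from the recursion operators $R_{i,n}$, together with the recursion \eqref{f5p90}, which holds essentially by construction of the $R_{i,n}$. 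You bypass the $R_{i,n}$ bookkeeping entirely and instead compute $\la\omega,h_1\ra\la P^{(n-1)}(\omega),h_2\rhd\dotsm\rhd h_n\ra$ directly in $\mathbb F$ via the representation \eqref{cjtre57k89}, then solve for the top term. In substance the two arguments coincide: the induction step hidden in the paper's proof of Lemma~\ref{xe7868or} (where Lemma~\ref{xtse546} is applied to the first term of \eqref{f5p90}) is precisely your four-term computation. What the paper's organization buys is reuse --- the $R_{i,n}$ machinery and Lemma~\ref{xe7868or} are needed anyway to prove $\mathbf{CP}=\mathbf{OCP}$ (Lemma~\ref{tr78759p}), so the corollary then comes for free; what yours buys is a self-contained, induction-free derivation of the recursion that makes the cancellation structure explicit rather than leaving it implicit in the definition of the operators $R_{i,n}$.
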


Let us now show that formula \eqref{cjtre57k89} admits a formal interpretation as in formula \eqref{yuro86}. For each $t\in\R_+$, we set formally
$$\di_t^\dag:=A^+(\delta_t),\quad \di_t:=A^-_1(\delta_t),$$
so that, for each $h\in B_0(\R_+)$,
$$A^+(h)=\int_{\R_+} h(t)\di_t^\dag\,dt,\quad A_1^-(h)=\int_{\R_+} h(t)\di_t\,dt.$$
Thus, for $f^{(n)}\in L^2(T_n,m_n)$, 
\begin{align*}\big(\di_t^\dag f^{(n)}\big)(t_1,\dots,t_{n+1})&=\big(\delta_t\rhd f^{(n)}\big)(t_1,\dots,t_{n+1})\\
&=\chi_{[t_2,\infty)}(t)\delta_t(t_1)f^{(n)}(t_2,\dots,t_{n+1})\end{align*}
 and
$$\big(\di_t f^{(n)}\big)(t_1,\dots,t_{n-1})=\chi_{[t_1,\infty)}(t)f^{(n)}(t,t_1,t_2,\dots,t_{n-1}).$$
We then calculate, for $h\in B_0(\R_+)$ and $(t_1,\dots,t_n)\in T_n$,
\begin{align*}
&\bigg(\int_{\R_+}dt
\,  h(t) \di_t^\dag\di_t f^{(n)}\bigg)(t_1,\dots,t_n)=\int_{\R_+}dt\, h(t) \big(\di_t^\dag\di_t f^{(n)}\big)(t_1,\dots,t_n)\\
&\quad =\int_{\R_+}dt\, h(t)\chi_{[t_2,\infty)}(t)\delta_t(t_1)\big(\di_t f^{(n)}\big)(t_2,\dots,t_n)\\
&\quad =\int_{\R_+}dt\, h(t)\chi_{[t_2,\infty)}^2(t)\delta_t(t_1)f^{(n)}(t,t_2,\dots,t_n)\\
&\quad =\int_{t_2}^\infty dt\, h(t)\delta_t(t_1)f^{(n)}(t,t_2,\dots,t_n)\\
&\quad =h(t_1)f^{(n)}(t_1,t_2,\dots,t_n),
\end{align*}
since $t_1\ge t_2$. Thus,
$$\int_{\R_+}dt
\,  h(t) \di_t^\dag\di_t=A^0(h).$$
Analogously, it can be shown that
$$ \int_{\R_+}dt
\,  h(t) \di_t^\dag\di_t\di_t=A_2^-(h).$$
Thus, we can formally write formula \eqref{cjtre57k89} in the form
\begin{equation}\label{yre57i5}
\la\omega,h\ra=\int_{\R_+}h(t)\big(\di_t^\dag+\lambda\di_t^\dag\di_t+\di_t+\eta\di_t^\dag\di_t\di_t\big)\,dt.\end{equation}
If we also formally write
\begin{equation}\label{tyr7i585p}
\la\omega,h\ra=\int_{\R_+} \omega(t)h(t)\,dt,\end{equation}
then formulas \eqref{yre57i5} and \eqref{tyr7i585p} imply \eqref{yuro86}.

\section{Proofs}
Some parts of the proofs below take their ideas from the case of the free Meixner orthogonal polynomials \cite{BL1}. For the reader's convenience, we will still present  self-contained proofs of the results from Section 3.

Below, for  open intervals $\Delta_1,\Delta_2\subset\R_+$, we write $\Delta_1>\Delta_2$ if for any $t_1\in\Delta_1$ and $t_2\in\Delta_2$, we have $t_1>t_2$. This particularly implies that $\Delta_1\cap\Delta_2=\varnothing$.

In the lemma below, $\operatorname{c.l.s.}$ stands for the closed linear span.

\begin{lemma}\label{dyrr968}
For  $n\in\N$, we define  closed subspaces $\mathcal X^{(n)}$, $\mathcal Y^{(n)}$, and $\mathcal Z^{(n)}$ of $\mathcal F$ by
\begin{align*}
&\mathcal X^{(n)}:=\operatorname{c.l.s.}\big\{\Omega,\ \la\omega, h_1 \ra\dotsm \la \omega,h_i\ra \Omega\mid
h_1,\dots,h_i\in B_0(\R_+),\ i\in\{1,\dots,n\}\big\},\\
&\mathcal Y^{(n)}:=\operatorname{c.l.s.}\big\{\Omega,\ (\chi_{\Delta_1}\otimes s^{l_1})\rhd\dotsm\rhd(\chi_{\Delta_i}\otimes s^{l_i})\mid
 (l_1,\dots,l_i)\in M,\\
 &\quad l_1+\dots+l_i+i\le n,\
 \text{$\Delta_1,\dots,\Delta_i\subset\R_+$ are open intervals, $\Delta_1>\Delta_2>\dots>\Delta_i$}
 \big\},\\
 &\mathcal Z^{(n)}:=\operatorname{l.s.}\big\{\Omega,\ f^{(i)}(t_1,\dots,t_i)q_{l_1}(s_1)\dotsm q_{l_i}(s_i)\mid
 (l_1,\dots,l_i)\in M,\ l_1+\dots+l_i+i\le n,\\
 &\quad f^{(i)}\in L^2(T_i,dt_1\dotsm dt_i),\ \text{each $q_{l_j}$ is a polynomial on $\R$ of order $l_j$}\big\}.
\end{align*}
Then $\mathcal X^{(n)}=\mathcal Y^{(n)}=\mathcal Z^{(n)}$.
\end{lemma}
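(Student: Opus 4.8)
The plan is to identify all three spaces with one concretely described filtration built from the orthogonal polynomials $(p_l)$ of $\nu$. Since $\nu$ has compact support, polynomials are dense in $L^2(\R,\nu)$, so each $\mathcal F^{(i)}=L^2(T_i,dt_1\cdots dt_i)\otimes L^2(\R,\nu)^{\otimes i}$ splits into the mutually orthogonal pieces $\mathcal H^{(l_1,\dots,l_i)}:=\{f^{(i)}(t_1,\dots,t_i)p_{l_1}(s_1)\cdots p_{l_i}(s_i)\mid f^{(i)}\in L^2(T_i,dt_1\cdots dt_i)\}$. Assigning to such a piece the degree $l_1+\dots+l_i+i$, I set $\mathcal W^{(n)}:=\R\Omega\oplus\bigoplus_{l_1+\dots+l_i+i\le n}\mathcal H^{(l_1,\dots,l_i)}$. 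I would first check that $\mathcal Z^{(n)}=\mathcal W^{(n)}$: for each fixed $i$ the generators of $\mathcal Z^{(n)}$ span $L^2(T_i,dt_1\cdots dt_i)$ tensored with the space of $s$-polynomials of total degree $\le n-i$, which is finite-dimensional and hence closed, and monomials and orthogonal polynomials span the same polynomial space; in particular $\mathcal Z^{(n)}$ is automatically closed, resolving the apparent tension between an $\operatorname{l.s.}$ and a $\operatorname{c.l.s.}$. Then I would check $\mathcal Y^{(n)}=\mathcal W^{(n)}$ by the same bookkeeping together with the measure-theoretic fact that indicators of ordered boxes $\Delta_1\times\dots\times\Delta_i$ with $\Delta_1>\dots>\Delta_i$ are total in $L^2(T_i,dt_1\cdots dt_i)$ (recall that $T_i\setminus\widetilde T_i$ is Lebesgue-null).

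The real content is $\mathcal X^{(n)}=\mathcal W^{(n)}$, which I would handle through the degree filtration. A direct reading of the actions in Section~2 shows that $a^+(h)$ maps $\mathcal H^{(l_1,\dots,l_i)}$ into $\mathcal H^{(0,l_1,\dots,l_i)}$ (degree $+1$); that $a^0(h)$, via the recursion \eqref{hdtrss}, sends it into $\mathcal H^{(l_1+1,\dots)}\oplus\mathcal H^{(l_1,\dots)}\oplus\mathcal H^{(l_1-1,\dots)}$ (degrees $+1,0,-1$); and that $a^-(h)$ lowers degree by $1$, annihilating the piece unless $l_1=0$ (by orthogonality of $p_{l_1}$ to the constants). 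Hence every $\la\omega,h\ra$ raises degree by at most one, and an induction on the number of factors gives $\mathcal X^{(n)}\subseteq\mathcal W^{(n)}$.

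For the reverse inclusion I would induct on $n$. Writing $\mathcal W^{(n)}=\mathcal W^{(n-1)}\oplus\mathcal G^{(n)}$ with $\mathcal G^{(n)}$ the degree-$n$ homogeneous part, and using the hypothesis $\mathcal X^{(n-1)}=\mathcal W^{(n-1)}\subseteq\mathcal X^{(n)}$, it suffices to show that the orthogonal projection $\pi_n$ onto $\mathcal G^{(n)}$ maps $\mathcal X^{(n)}$ onto a dense subset. By the degree count just established, $\pi_n\big(\la\omega,h_1\ra\dotsm\la\omega,h_n\ra\Omega\big)=D(h_1)\dotsm D(h_n)\Omega$, where $D(h):=a^+(h)+a^{0,+}(h)$ is the degree-raising part, $a^{0,+}$ being the $p_{l_1+1}$-component of $a^0$. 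So I must prove that the vectors $D(h_1)\dotsm D(h_n)\Omega$ are total in $\mathcal G^{(n)}$, which I would do by downward induction on the particle number $i$. For ordered disjoint intervals $\Delta_1>\dots>\Delta_i$ and a composition with $l_1+\dots+l_i=n-i$, I consider $D(\chi_{\Delta_1})^{l_1+1}\dotsm D(\chi_{\Delta_i})^{l_i+1}\Omega$. Disjointness forces the first operator of each block to be a creation (an $a^{0,+}(\chi_{\Delta_j})$ acting on a front particle living in $\Delta_{j+1}$ vanishes, as does any neutral on $\Omega$), so the part of this vector with exactly $i$ particles comes from a single word and equals $\chi_{\Delta_1}(t_1)p_{l_1}(s_1)\dotsm\chi_{\Delta_i}(t_i)p_{l_i}(s_i)$, while every remaining contribution has strictly more than $i$ particles and hence already lies in the projected space by the inductive hypothesis. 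Subtracting those and letting the intervals vary (again invoking totality of ordered boxes) yields all of $\mathcal G^{(n)}\cap\mathcal F^{(i)}$; the base case $i=n$ is immediate, since then each block has length one and only creations occur.

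The step I expect to be the main obstacle is this last reachability argument. The delicate point is the clean separation, inside the expansion of the block product, between the unique ``leading'' word that reproduces the target homogeneous vector and all ``spurious'' words, together with the verification that the latter are genuinely of higher particle number and therefore absorbed by the inductive hypothesis. The disjoint-interval device is precisely what pins down the leading word and prevents the raised spectral power from leaking across intervals, and carrying the resulting bookkeeping through the downward induction is where the proof requires the most care.
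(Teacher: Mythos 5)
Your proposal is correct, and for the core inclusion it takes a genuinely different route from the paper. The easy identifications agree: the paper shows $\mathcal Y^{(n)}=\mathcal Z^{(n)}$ (working with monomials $s^l$ rather than your $p_l$'s; your remark that $\mathcal Z^{(n)}$ is a finite orthogonal sum of mutually orthogonal closed pieces, hence closed, is essentially what the paper records separately as Lemma \ref{yut9p}), and it obtains $\mathcal X^{(n)}\subseteq\mathcal Z^{(n)}$ by the same degree count you use. The divergence is in proving that $\mathcal X^{(n)}$ contains the other spaces. The paper proves $\mathcal Y^{(n)}\subseteq\mathcal X^{(n)}$ bottom-up, by induction on $n$: a generator with leading index $l_1\ge1$ is written as $a^0(\chi_{\Delta_1})$ applied to the generator with $l_1-1$, then $a^0(\chi_{\Delta_1})=\la\omega,\chi_{\Delta_1}\ra-a^+(\chi_{\Delta_1})-a^-(\chi_{\Delta_1})$ is substituted; the field-operator term lies in $\mathcal X^{(n+1)}$ by induction, the $a^-$-correction collapses, thanks to the ordering of the intervals, to a multiple of a shorter generator, and the $a^+$-correction $\chi_{\Delta_1}\rhd(\chi_{\Delta_1}\otimes s^{l_1-1})\rhd\cdots$, which has a repeated interval, requires an extra approximation step (splitting $\Delta_1$ into small subintervals to reduce to the $l_1=0$ case). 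You instead argue top-down: project onto the top homogeneous degree, identify the projection of a length-$n$ product of field operators with the product of the degree-raising parts $D(h)=a^+(h)+a^{0+}(h)$, and prove totality of such products by downward induction on the particle number, the ordered disjoint intervals pinning down a unique leading word while all spurious words are absorbed wholesale---whatever their precise form---because they carry strictly more particles. The trade-off: the paper's induction avoids your word combinatorics and the nested induction, at the price of the interval-splitting approximation; your argument dispenses with that approximation and, as a by-product, already contains the projection formula $I\la P^{(n)}(\omega),h_1\otimes\dots\otimes h_n\ra=(a^+(h_1)+a^{0+}(h_1))\dotsm(a^+(h_{n-1})+a^{0+}(h_{n-1}))a^+(h_n)\Omega$ together with its expansion over compositions, which the paper derives separately afterwards (Lemmas \ref{ctre644e} and \ref{r57r9}) as the key step toward Theorems \ref{f754o} and \ref{yd7i649}.
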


\begin{proof}
By definition, $\mathcal Y^{(n)}\subset \mathcal Z^{(n)}$. Since the Lebesgue measure is non-atomic, it can be easily shown that
\begin{align*}
&\mathcal Y^{(n)}=\operatorname{l.s.}\big\{\Omega,\ f^{(i)}(t_1,\dots,t_i)s_1^{l_1}\dotsm s_i^{l_i}\mid
 (l_1,\dots,l_i)\in M,\ l_1+\dots+l_i+i\le n,\\
 &\quad f^{(i)}\in L^2(T_i,dt_1\dotsm dt_i)\big\}.
\end{align*}
Hence,  $\mathcal Y^{(n)}= \mathcal Z^{(n)}$. Using the definition of the operator $\la\omega,h\ra$ in $\mathcal F$, one can easily show by induction on $n$ that $\mathcal X^{(n)}\subset\mathcal Z^{(n)}$. Thus, to prove the lemma, it suffices to prove the inclusion $\mathcal Y^{(n)}\subset\mathcal X^{(n)}$. We prove this by induction on $n$. The statement is obviously true for $n=1$. Assume that it is true for up to $n$, and let us prove it for $n+1$. Let $(l_1,\dots,l_i)\in M$, $l_1+\dots+l_i+i=n+1$. If $l_1=0$, we get (using the obvious notations):
\begin{align}
&\chi_{\Delta_1}\rhd (\chi_{\Delta_2}\otimes s^{l_2})\rhd\dotsm \rhd(\chi_{\Delta_i}\otimes s^{l_i}) =a^+(\chi_{\Delta_1})(\chi_{\Delta_2}\otimes s^{l_2})\rhd\dotsm \rhd(\chi_{\Delta_i}\otimes s^{l_i})\notag\\
&\quad=\la\omega,\chi_{\Delta_1}\ra(\chi_{\Delta_2}\otimes s^{l_2})\rhd\dotsm \rhd(\chi_{\Delta_i}\otimes s^{l_i})\in\mathcal X^{(n+1)}.\label{rt6ie7869p}
\end{align}
If $l_1\ge1$,
\begin{align}
&(\chi_{\Delta_1}\otimes s^{l_1})\rhd\dotsm\rhd(\chi_{\Delta_i}\otimes s^{l_i})\notag\\
&\quad=
a^0(\chi_{\Delta_1})(\chi_{\Delta_1}\otimes s^{l_1-1})\rhd(\chi_{\Delta_2}\otimes s^{l_2})\rhd\dotsm\rhd(\chi_{\Delta_i}\otimes s^{l_i})\notag\\
&\quad =\la\omega, \chi_{\Delta_1}\ra(\chi_{\Delta_1}\otimes s^{l_1-1})\rhd(\chi_{\Delta_2}\otimes s^{l_2})\rhd\dotsm\rhd(\chi_{\Delta_i}\otimes s^{l_i})\notag\\
&\qquad-\big(a^+(\chi_{\Delta_1})+a^-(\chi_{\Delta_1})\big)(\chi_{\Delta_1}\otimes s^{l_1-1})\rhd(\chi_{\Delta_2}\otimes s^{l_2})\rhd\dotsm\rhd(\chi_{\Delta_i}\otimes s^{l_i}). \label{dye7i}\end{align}
We get
\begin{align}
&a^+(\chi_{\Delta_1})(\chi_{\Delta_1}\otimes s^{l_1-1})\rhd(\chi_{\Delta_2}\otimes s^{l_2})\rhd\dotsm\rhd(\chi_{\Delta_i}\otimes s^{l_i})\notag\\
&\quad =\chi_{\Delta_1}\rhd (\chi_{\Delta_1}\otimes s^{l_1-1})\rhd(\chi_{\Delta_2}\otimes s^{l_2})\rhd\dotsm\rhd(\chi_{\Delta_i}\otimes s^{l_i}).\label{rte6i74}\end{align}
It follows from \eqref{rt6ie7869p} by approximation that the vector on the right hand side of \eqref{rte6i74} belongs to $\mathcal X^{(n+1)}$. Furthermore,
\begin{align}
&a^-(\chi_{\Delta_1})(\chi_{\Delta_1}\otimes s^{l_1-1})\rhd(\chi_{\Delta_2}\otimes s^{l_2})\rhd\dotsm\rhd(\chi_{\Delta_i}\otimes s^{l_i})\notag\\
&\quad=\int_{\R}v^{l_1-1}\,d\nu(v) (g\otimes s^{l_2})\rhd(\chi_{\Delta_3}\otimes s^{l_3})\rhd\dotsm\rhd(\chi_{\Delta_i}\otimes s^{l_i}),\label{r75i}
\end{align}
where
$$g(t):=\chi_{\Delta_2}(t)\int_{\Delta_1\cap(t,\infty)}du=\chi_{\Delta_2}(t)\int_{\Delta_1}du, $$
since $\Delta_2>\Delta_1$. Hence, the vector on the right hand side of \eqref{r75i} belongs to $\mathcal X^{(n-1)}$. Therefore, the vector on the right nand side of \eqref{dye7i} belongs to $\mathcal X^{(n+1)}$.
\end{proof}

\begin{proof}[Proof of Theorem \ref{yur7}] Since the probability measure $\nu$ on $\R$ has compact support, the set of polynomials on $\R$ is dense in $L^2(\R,\nu)$. Therefore, the set $\bigcup_{n=1}^\infty \mathcal Z^{(n)}$ is dense in $\mathcal F$. Hence, by Lemma \ref{dyrr968}, the set $\bigcup_{n=1}^\infty \mathcal X^{(n)}$ is dense in $\mathcal F$, which implies the theorem.
\end{proof}

\begin{lemma}\label{yut9p}
For $(l_1,\dots,l_i)\in $M, let $\mathcal H_{l_1,\dots,l_i}$ denote the following subspace of $\mathcal F$:
$$\mathcal H_{l_1,\dots,l_i}=\big\{f^{(i)}(t_1,\dots,t_i)p_{l_1}(s_1)\dotsm p_{l_i}(s_i)\mid f^{(i)}\in L^2(T_i,dt_1\dotsm dt_i)\big\}.$$
Then 
\begin{equation}\label{dy457i9}\mathcal F=\mathcal F^{(0)}\oplus\bigoplus_{(l_1,\dots,l_i)\in M}\mathcal H_{l_1,\dots,l_i}.\end{equation}
\end{lemma}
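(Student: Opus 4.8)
The plan is to observe that each subspace $\mathcal H_{l_1,\dots,l_i}$ with $(l_1,\dots,l_i)\in\N_0^i$ consists of functions of exactly $i$ pairs of variables $(t_j,s_j)$, and hence lies entirely inside the $i$-th component $\mathcal F^{(i)}$ of the monotone Fock space. Thus the claimed decomposition \eqref{dy457i9} splits component by component, and it suffices to prove that for each fixed $n\in\N$,
\begin{equation*}
\mathcal F^{(n)}=\bigoplus_{(l_1,\dots,l_n)\in\N_0^n}\mathcal H_{l_1,\dots,l_n},
\end{equation*}
the term $\mathcal F^{(0)}=\R$ corresponding to $i=0$. I would therefore fix $n$ and work inside $\mathcal F^{(n)}=L^2(S_n,\,dt_1\,d\nu(s_1)\dotsm dt_n\,d\nu(s_n))$.

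The key structural step is a Fubini-type factorization. Since the constraint defining $S_n$ involves only the $t$-variables, namely $(t_1,\dots,t_n)\in T_n$, while the $s$-variables range freely over $\R$ with the product measure $\nu^{\otimes n}$, the space $\mathcal F^{(n)}$ factorizes as the Hilbert tensor product
\begin{equation*}
\mathcal F^{(n)}\cong L^2(T_n,dt_1\dotsm dt_n)\otimes\underbrace{L^2(\R,\nu)\otimes\dots\otimes L^2(\R,\nu)}_{n}.
\end{equation*}
Under this identification the function $f^{(n)}(t_1,\dots,t_n)p_{l_1}(s_1)\dotsm p_{l_n}(s_n)$ corresponds to the elementary tensor $f^{(n)}\otimes p_{l_1}\otimes\dots\otimes p_{l_n}$, so $\mathcal H_{l_1,\dots,l_n}$ is precisely the closed subspace $L^2(T_n,dt)\otimes(\R p_{l_1})\otimes\dots\otimes(\R p_{l_n})$.

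Next I would invoke the fact that the monic orthogonal polynomials $(p_k)_{k=0}^\infty$ form an orthogonal basis of $L^2(\R,\nu)$: orthogonality is built into their definition, and completeness follows because $\nu$ has compact support, so polynomials are dense in $L^2(\R,\nu)$ and $p_0,\dots,p_k$ span all polynomials of degree $\le k$. (When $\nu$ is supported on $N$ points, $L^2(\R,\nu)$ is $N$-dimensional with orthogonal basis $p_0,\dots,p_{N-1}$, and the convention $p_k=0$ for $k\ge N$ keeps the argument valid, the corresponding $\mathcal H_{l_1,\dots,l_n}$ being trivial as soon as some $l_j\ge N$.) Hence $\{p_{l_1}\otimes\dots\otimes p_{l_n}\mid(l_1,\dots,l_n)\in\N_0^n\}$ is an orthogonal basis of the $n$-fold tensor power $L^2(\R,\nu)^{\otimes n}$, and tensoring with $L^2(T_n,dt)$ yields the displayed orthogonal decomposition of $\mathcal F^{(n)}$. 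Orthogonality of two distinct summands $\mathcal H_{l_1,\dots,l_n}$ and $\mathcal H_{l_1',\dots,l_n'}$ reduces, through the factorization, to the orthogonality $p_{l_j}\perp p_{l_j'}$ at any index $j$ where $l_j\ne l_j'$, while orthogonality across different values of $n$ is automatic since the subspaces sit in distinct components $\mathcal F^{(n)}$.

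I expect no serious obstacle beyond bookkeeping: the only point requiring genuine care is justifying the tensor-product factorization of $\mathcal F^{(n)}$ rigorously, that is, verifying that writing the measure on $S_n$ as a product and noting that the $T_n$-constraint decouples from the $s$-variables indeed produces the Hilbert tensor product, and that the elementary tensors $f^{(n)}\otimes p_{l_1}\otimes\dots\otimes p_{l_n}$ are total. Both are standard consequences of $L^2$ tensor-product theory once the measure is exhibited as a product, so the proof amounts to a direct application of Fubini together with the completeness of the orthogonal polynomial system in $L^2(\R,\nu)$.
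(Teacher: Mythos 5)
Your proof is correct and takes essentially the same route as the paper's: the paper's two-sentence argument (mutual orthogonality of the $\mathcal H_{l_1,\dots,l_i}$ from the orthogonality of the $p_k$ in $L^2(\R,\nu)$ together with the definition of the scalar product in $\mathcal F$, and totality from the density of polynomials in $L^2(\R,\nu)$, $\nu$ having compact support) is precisely what your component-wise tensor-product factorization of $\mathcal F^{(n)}$ spells out. The only difference is the level of detail, including your explicit handling of the case where $\nu$ is supported on finitely many points.
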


\begin{proof}
The statement that the subspaces $\mathcal H_{l_1,\dots,l_i}$ are orthogonal to each other follows from the definition of the scalar product in $\mathcal F$ and the fact that the polynomials $(p_k)_{k=0}^\infty$ are orthogonal in $L^2(\R,\nu)$. Since the polynomials are dense in $L^2(\R,\nu)$, the statement follows.
\end{proof}

We denote $\mathcal H^{(0)}:=\mathcal F^{(0)}$, and for $n\in\mathbb N$ we denote
\begin{equation}\label{crte6ei6}
\mathcal H^{(n)}:=\bigoplus_{\substack{(l_1,\dots,l_i)\in M\\l_1+\dots+l_i+i=n}}\mathcal H_{l_1,\dots,l_i}.\end{equation}
Using Lemma~\ref{yut9p}, we get 
\begin{equation}\label{y69089}
\mathcal F=\bigoplus_{n=0}^\infty \mathcal H^{(n)}.\end{equation}

\begin{lemma}\label{ut96p}
For each $n\in\N_0$, we have $I\big(\mathbf{OP}^{(n)}\big)=\mathcal H^{(n)}$.
\end{lemma}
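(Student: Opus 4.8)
The plan is to derive the lemma directly from Lemma~\ref{dyrr968}, the unitarity of $I$, and the orthogonal decomposition \eqref{y69089}; no fresh induction on $n$ is needed, since the inductive work has already been carried out in Lemma~\ref{dyrr968}. The one genuinely new point is to translate the \emph{multiplicative} description of the spaces $\mathcal X^{(n)}$ into the \emph{graded} description provided by the spaces $\mathcal H^{(k)}$.

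First I would identify $\mathcal X^{(n)}$ with $I\big(\overline{\mathbf{P}^{(n)}}\big)$. Indeed, $IP=P\Omega$ and $\la\omega,h_1\ra\dotsm\la\omega,h_i\ra\Omega=I\la\omega^{\otimes i},h_1\otimes\dots\otimes h_i\ra$; since the monomials $\la\omega^{\otimes i},h_1\otimes\dots\otimes h_i\ra$ span, by linearity and strong continuity, all monomials $\la\omega^{\otimes i},f^{(i)}\ra$ with $f^{(i)}\in B_0(\R_+^i)$, the closed linear span in the definition of $\mathcal X^{(n)}$ equals $\overline{I(\mathbf{P}^{(n)})}=I\big(\overline{\mathbf{P}^{(n)}}\big)$, the last step using that $I$ is unitary. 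Next I would rewrite $\mathcal Z^{(n)}$ in the monic orthogonal basis: expanding each factor $q_{l_j}$ of degree $l_j$ in $p_0,\dots,p_{l_j}$ gives $\mathcal Z^{(n)}=\bigoplus_{(l_1,\dots,l_i)\in M,\ l_1+\dots+l_i+i\le n}\mathcal H_{l_1,\dots,l_i}=\bigoplus_{k=0}^n\mathcal H^{(k)}$, a closed subspace by \eqref{y69089}. Combining these two identifications with Lemma~\ref{dyrr968} ($\mathcal X^{(n)}=\mathcal Z^{(n)}$) yields $I\big(\overline{\mathbf{P}^{(n)}}\big)=\bigoplus_{k=0}^n\mathcal H^{(k)}$.

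From here the graded statement follows by telescoping: subtracting the identity for $n-1$ from that for $n$, and using that $I$ is unitary and that the $\mathcal H^{(k)}$ are mutually orthogonal (Lemma~\ref{yut9p}), I obtain $I\big(\mathbf{S}^{(n)}\big)=\big(\bigoplus_{k=0}^n\mathcal H^{(k)}\big)\ominus\big(\bigoplus_{k=0}^{n-1}\mathcal H^{(k)}\big)=\mathcal H^{(n)}$ (for $n=0$ both sides are just $\mathbb R$). It then remains to pass from $\mathbf{S}^{(n)}$ to $\mathbf{OP}^{(n)}$, the span of the order-$n$ orthogonal polynomials: by construction $\la P^{(n)}(\omega),f^{(n)}\ra$ is the projection of $\la\omega^{\otimes n},f^{(n)}\ra$ onto $\mathbf{S}^{(n)}$, and since the order-$\le n$ monomials are dense in $\overline{\mathbf{P}^{(n)}}$, their projections are dense in $\mathbf{S}^{(n)}$, so $I\big(\mathbf{OP}^{(n)}\big)$ and $\mathcal H^{(n)}$ have the same closure. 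The step I expect to require the most care is precisely this last matching: one must keep track of whether $\mathbf{OP}^{(n)}$ and $\mathcal H^{(n)}$ are read as the algebraic spans (with $B_0$ coefficients) or as their closures, and one must account for the monotone peculiarity flagged in the introduction — a nonzero monomial of order $n$ whose coefficient vanishes on $T_n$ projects to $0$ in $\mathbf{S}^{(n)}$ — so that the kernel of $f^{(n)}\mapsto\la P^{(n)}(\omega),f^{(n)}\ra$ does not interfere with surjectivity onto a dense subspace of $\mathcal H^{(n)}$.
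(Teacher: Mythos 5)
Your proof is correct and takes essentially the same route as the paper's: both reduce the lemma, via unitarity of $I$ and the orthogonal decomposition \eqref{y69089}, to the identity $I\big(\overline{\mathbf{P}^{(n)}}\big)=\bigoplus_{i=0}^n\mathcal H^{(i)}$, which is obtained from the chain $\mathcal X^{(n)}=\mathcal Z^{(n)}=\mathcal F^{(0)}\oplus\bigoplus_{l_1+\dots+l_i+i\le n}\mathcal H_{l_1,\dots,l_i}$ of Lemma~\ref{dyrr968}. The only difference is that you make explicit the steps the paper leaves implicit, namely the identification $\mathcal X^{(n)}=I\big(\overline{\mathbf{P}^{(n)}}\big)$, the expansion of the polynomials $q_{l_j}$ in the orthogonal basis $(p_k)$, and the reading of $\mathbf{OP}^{(n)}$ as the closed span $\mathbf{S}^{(n)}$ of the order-$n$ orthogonal polynomials.
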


\begin{proof} In view of \eqref{y69089}, the statement of the lemma is equivalent to the statement 
\begin{equation}\label{esw6e}
I\left(\overline{\mathbf{P}^{(n)}}\right)=\bigoplus_{i=0}^n\mathcal H^{(i)}.
\end{equation}
By \eqref{crte6ei6} and Lemma \ref{dyrr968}, 
$$ \bigoplus_{i=0}^n\mathcal H^{(i)}=\mathcal F^{(0)}\oplus \bigoplus_{\substack{(l_1,\dots,l_i)\in M\\l_1+\dots+l_i+i\le n}}\mathcal H_{l_1,\dots,l_i}=\mathcal Z^{(n)}=\mathcal X^{(n)},$$
which implies \eqref{esw6e}.
\end{proof}

For each $h\in B_0(\R_+)$, we will now represent the neutral operator $a^0(h)$ as a sum of three operators. To this end, we define bounded linear operators $a^{0+}(h)$, $a^{00}(h)$, and $a^{0-}(h)$ in $\mathcal F$ by 
$$a^{0+}(h)\Omega=a^{00}(h)\Omega=a^{0-}(h)\Omega=0$$
and for any $(l_1,\dots,l_i)\in M$ and $f^{(i)}\in L^2(T_i,dt_1\dotsm dt_i)$, \begin{align*}
&a^{0+}(h)f^{(i)}(t_1,\dots,t_i)p_{l_1}(s_1)\dotsm p_{l_i}(s_i)=h(t_1)f^{(i)}(t_1,\dots,t_i)p_{l_1+1}(s_1)p_{l_2}(s_2)\dotsm p_{l_i}(s_i),\\
&a^{00}(h)f^{(i)}(t_1,\dots,t_i)p_{l_1}(s_1)\dotsm p_{l_i}(s_i)=h(t_1)f^{(i)}(t_1,\dots,t_i)b_{l_1}p_{l_1}(s_1)\dotsm p_{l_i}(s_i),\\
&a^{0-}(h)f^{(i)}(t_1,\dots,t_i)p_{l_1}(s_1)\dotsm p_{l_i}(s_i)=h(t_1)f^{(i)}(t_1,\dots,t_i)a_{l_1}p_{l_1-1}(s_1)p_{l_2}(s_2)\dotsm p_{l_i}(s_i).
\end{align*}
In view of \eqref{hdtrss}, we therefore get
\begin{equation}\label{futr8o56}
a^0(h)=a^{0+}(h)+a^{00}(h)+a^{0-}(h).\end{equation}

\begin{lemma}\label{ctre644e}
For any $h_1,\dots,h_n\in B_0(\R_+)$, we have
$$I\la P^{(n)}(\omega),h_1\otimes\dots\otimes h_n\ra=(a^+(h_1)+a^{+0}(h_1))\dotsm
(a^+(h_{n-1})+a^{+0}(h_{n-1}))a^+(h_n)\Omega.$$
\end{lemma}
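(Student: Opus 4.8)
The plan is to transport the whole identity to the monotone Fock space $\mathcal F$ via the unitary $I$ and to read it off from the grading $\mathcal F=\bigoplus_{n=0}^\infty\mathcal H^{(n)}$ recorded in \eqref{y69089}. First I would identify what the left-hand side becomes under $I$. Since $\la P^{(n)}(\omega),h_1\otimes\dots\otimes h_n\ra$ is by definition the orthogonal projection of the monomial $\la\omega^{\otimes n},h_1\otimes\dots\otimes h_n\ra$ onto $\mathbf S^{(n)}$, and since $I$ is unitary and hence intertwines orthogonal projections, $I\la P^{(n)}(\omega),h_1\otimes\dots\otimes h_n\ra$ is the orthogonal projection onto $I(\mathbf S^{(n)})$ of the vector $I\la\omega^{\otimes n},h_1\otimes\dots\otimes h_n\ra=\la\omega,h_1\ra\dotsm\la\omega,h_n\ra\,\Omega$. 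Now Lemma~\ref{ut96p} together with $\mathbf S^{(n)}=\overline{\mathbf P^{(n)}}\ominus\overline{\mathbf P^{(n-1)}}$ gives $I(\mathbf S^{(n)})=\mathcal H^{(n)}$, so the task reduces to computing the $\mathcal H^{(n)}$-component of $\la\omega,h_1\ra\dotsm\la\omega,h_n\ra\,\Omega$.

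Next I would split each factor. Using $\la\omega,h\ra=a^+(h)+a^0(h)+a^-(h)$ and \eqref{futr8o56}, write
$$\la\omega,h\ra=a^+(h)+a^{0+}(h)+a^{00}(h)+a^{0-}(h)+a^-(h),$$
and then determine how each of the five summands acts on the grading. A short inspection of the defining formulas shows that $a^+(h)$ and $a^{0+}(h)$ map $\mathcal H^{(k)}$ into $\mathcal H^{(k+1)}$ (the former prepends a new slot carrying $p_0$, the latter raises $l_1$ by one); that $a^{00}(h)$ maps $\mathcal H^{(k)}$ into itself; and that $a^{0-}(h)$ and $a^-(h)$ map $\mathcal H^{(k)}$ into $\mathcal H^{(k-1)}$. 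For $a^{0-}$ this is immediate since it lowers $l_1$ by one, and for $a^-$ it follows from the orthogonality relation $\int_\R p_l\,d\nu=\delta_{l,0}$, which annihilates every block $\mathcal H_{l_1,\dots,l_i}$ with $l_1\ge1$ and, on the blocks with $l_1=0$, integrates out the first slot.

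Then comes the counting argument. Expanding $\la\omega,h_1\ra\dotsm\la\omega,h_n\ra$ into $5^n$ terms, each a composition of one summand per factor, and applying such a composition to $\Omega\in\mathcal H^{(0)}$, the result contributes to the $\mathcal H^{(n)}$-component only if its total degree shift equals $+n$. Since each summand shifts the degree by at most $+1$ and there are exactly $n$ factors, this forces the chosen summand in every factor to be a raising one, i.e.\ to lie in $a^+(h_j)+a^{0+}(h_j)$; moreover, because each raising operator raises the degree by exactly one, the surviving term lands entirely in $\mathcal H^{(n)}$ with no leakage into lower blocks, so it equals the sought projection. Hence $I\la P^{(n)}(\omega),h_1\otimes\dots\otimes h_n\ra=\big(a^+(h_1)+a^{0+}(h_1)\big)\dotsm\big(a^+(h_n)+a^{0+}(h_n)\big)\Omega$, and since $a^{0+}(h_n)\Omega=0$ by definition, the rightmost factor collapses to $a^+(h_n)\Omega$, giving the asserted formula.

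I expect the only genuinely delicate point to be the degree-shift bookkeeping for the annihilation operator $a^-$: one must check that it sends each block $\mathcal H_{l_1,\dots,l_i}$ cleanly into a single lower block rather than into a mixture, which is exactly where the orthogonality $\int_\R p_l\,d\nu=\delta_{l,0}$ and the nesting $t_1\ge t_2$ on $T_i$ come into play. Everything else is routine: the identification $I(\mathbf S^{(n)})=\mathcal H^{(n)}$ is already furnished by Lemma~\ref{ut96p}, and the combinatorial step is merely the observation that $n$ operators, each raising the degree by at most one, can reach degree $n$ from degree $0$ only by raising at every single step.
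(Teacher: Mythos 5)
Your proposal is correct and takes essentially the same route as the paper's proof: reduce via Lemma~\ref{ut96p} to computing the orthogonal projection of $\la\omega,h_1\ra\dotsm\la\omega,h_n\ra\,\Omega$ onto $\mathcal H^{(n)}$, then expand each factor by the five-term decomposition \eqref{uf77irc} and observe that only the degree-raising summands $a^+(h_j)+a^{0+}(h_j)$ can contribute. The paper compresses this counting step into ``from here the statement easily follows''; your write-up simply makes that bookkeeping (including the role of $\int_\R p_l\,d\nu=\delta_{l,0}$ for $a^-$) explicit.
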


\begin{proof} Recall that $\la P^{(n)}(\omega),h_1\otimes\dots\otimes h_n\ra$ is the orthogonal projection in $L^2(\tau)$ of the monomial $\la\omega^{\otimes n},h_1\otimes\dots\otimes h_n\ra=\la\omega,h_1\ra\dotsm \la \omega,h_n\ra$ onto $\mathbf{OP}^{(n)}$.  Hence, by Lemma \ref{ut96p},  $I\la P^{(n)}(\omega),h_1\otimes\dots\otimes h_n\ra$, is the orthogonal projection in $\mathcal F$ of the vector $\la\omega,h_1\ra\dotsm \la \omega,h_n\ra\Omega$
onto $\mathcal H^{(n)}$. By \eqref{futr8o56}, for each $h\in B_0(\R_+)$,
\begin{equation}\label{uf77irc}
\la\omega,h\ra=a^+(h)+a^{0+}(h)+a^{00}(h)+a^{0-}(h)+a^-(h).\end{equation}
From here the statement easily follows.
\end{proof}

\begin{lemma}\label{r57r9}
For any $h_1,\dots,h_n\in B_0(\R_+)$, we have
\begin{align*}
I\la P^{(n)}(\omega),h_1\otimes\dots\otimes h_n\ra
 &=\sum_{\substack{(l_1,\dots,l_i)\in M\\l_1+\dots+l_i+i=n}} \big((h_1\dotsm h_{l_1+1})\otimes p_{l_1}\big)\rhd \big((h_{l_1+2}\dotsm h_{l_1+l_2+2})\otimes p_{l_2}\big)\\
 &\qquad\rhd\dots\rhd
\big((h_{l_1+l_2+\dots+l_{i-1}+i}\dotsm h_{n})\otimes p_{l_i}\big).
\end{align*}
\end{lemma}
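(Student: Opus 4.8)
The plan is to start from Lemma~\ref{ctre644e}, which already writes
$$
I\la P^{(n)}(\omega),h_1\otimes\dots\otimes h_n\ra=\big(a^+(h_1)+a^{0+}(h_1)\big)\dotsm\big(a^+(h_{n-1})+a^{0+}(h_{n-1})\big)a^+(h_n)\Omega,
$$
and to evaluate this product explicitly by expanding it into $2^{n-1}$ summands, one for each choice of $a^+$ or $a^{0+}$ in the first $n-1$ factors. The guiding idea is to read each summand combinatorially: applying $a^+(h_j)$ means that \emph{$h_j$ opens a new block}, while applying $a^{0+}(h_j)$ means that \emph{$h_j$ is absorbed into the current leftmost block}. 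A maximal run of absorptions producing a block of length $m$, built from some consecutive functions $h_{k+1},\dots,h_{k+m}$, should contribute the factor $(h_{k+1}\dotsm h_{k+m})\otimes p_{m-1}$, and distinct blocks should be joined by the monotone tensor product $\rhd$. Since the compositions of $n$ into block sizes $(l_1+1,\dots,l_i+1)$ are exactly indexed by the tuples $(l_1,\dots,l_i)\in M$ with $l_1+\dots+l_i+i=n$, this reading would reproduce the asserted right-hand side.

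To make this precise I would argue by induction on $n$, peeling off the leftmost factor. The base case $n=1$ is immediate, since $a^+(h_1)\Omega=h_1=h_1\otimes p_0$ (recall $p_0\equiv1$), matching the single term $(l_1)=(0)$. For the inductive step, write the degree-$n$ vector as $\big(a^+(h_1)+a^{0+}(h_1)\big)$ applied to the degree-$(n-1)$ vector associated with $h_2,\dots,h_n$, which by the inductive hypothesis equals the sum over compositions of the sequence $(h_2,\dots,h_n)$ of the corresponding monotone products. The two computations to carry out are the actions of $a^+(h_1)$ and $a^{0+}(h_1)$ on such a product vector $V$. From the definition of the creation operator one checks directly that $a^+(h_1)V=(h_1\otimes p_0)\rhd V$, so that $h_1$ is prepended as a fresh singleton block of polynomial degree $0$. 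From the definition of $a^{0+}$ one checks that it multiplies the leftmost time-function by $h_1(t_1)$ and raises the leftmost polynomial $p_{m-1}$ to $p_m$; hence if the leftmost block of $V$ is $(h_2\dotsm h_{m+1})\otimes p_{m-1}$, then $a^{0+}(h_1)$ turns it into $(h_1h_2\dotsm h_{m+1})\otimes p_m$, i.e.\ $h_1$ is absorbed and the block grows by one.

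These two operations correspond precisely to the two ways of extending a composition of $(h_2,\dots,h_n)$ to a composition of $(h_1,\dots,h_n)$: either $h_1$ forms its own leading block (the $a^+(h_1)$ term) or $h_1$ joins the first existing block (the $a^{0+}(h_1)$ term). Since every composition of $(h_1,\dots,h_n)$ arises in exactly one of these two ways, summing the two images of the inductive hypothesis yields exactly the sum over all compositions, which is the claimed formula. The main point to get right is the $s$-variable and polynomial bookkeeping in the second computation --- verifying that $a^{0+}$ raises the degree of \emph{only} the leading factor $p_{m-1}$ to $p_m$ while leaving the remaining $\rhd$-factors and the ordering $t_1>t_2>\dots$ of the time variables intact. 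Once that is checked, the block counting and the identification with the index set $\{(l_1,\dots,l_i)\in M\mid l_1+\dots+l_i+i=n\}$ are routine.
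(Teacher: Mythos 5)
Your proposal is correct and follows essentially the same route as the paper: the paper also starts from Lemma~\ref{ctre644e}, expands the product $\big(a^+(h_1)+a^{0+}(h_1)\big)\dotsm\big(a^+(h_{n-1})+a^{0+}(h_{n-1})\big)a^+(h_n)\Omega$ into $2^{n-1}$ terms indexed by the multi-indices $(l_1,\dots,l_i)$ with $l_1+\dots+l_i+i=n$, and evaluates each word using the facts that $a^+(h)$ prepends a fresh block $h\otimes p_0$ and $a^{0+}(h)$ multiplies the leading time-function by $h(t_1)$ while raising $p_{l_1}$ to $p_{l_1+1}$. Your induction on $n$ merely formalizes the step the paper leaves as ``from here the statement follows,'' so there is no substantive difference.
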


\begin{proof}By Lemma \ref{ctre644e},
\begin{align*}
&I\la P^{(n)}(\omega),h_1\otimes\dots\otimes h_n\ra
 =\sum_{\substack{(l_1,\dots,l_i)\in M\\l_1+\dots+l_i+i=n}} 
 a^{0+}(h_1)\dotsm a^{0+}(h_{l_1})a^{+}(h_{l_1+1})\\
 & \times a^{0+}(h_{l_1+2})\dotsm a^{0+}
 (h_{l_1+l_2+1})a^+(h_{l_1+l_2+2})\dotsm a^{0+}(h_{l_1+l_2+\dots+l_{i-1}+i})\dotsm a^{0+}(h_{n-1})a^+(h_n)\Omega.
\end{align*}
From here the statement follows.
\end{proof}

\begin{proof}[Proof of Theorem \ref{f754o}]
It suffices to show that, for any $h_1,\dots,h_n\in B_0(\R_+)$,
$$\|\la P^{(n)}(\omega),h_1\otimes\dots\otimes h_n\ra\|_{L^2(\tau)}^2=\|h_1\rhd\dotsm\rhd h_n\|_{L^2(T_n,m_n)}^2,$$
or equivalently
$$\|I\la P^{(n)}(\omega),h_1\otimes\dots\otimes h_n\ra\|_{\mathcal F}^2=\|h_1\rhd\dotsm\rhd h_n\|_{L^2(T_n,m_n)}^2.$$
But the latter formula follows immediately from Lemma \ref{r57r9} and the construction of the measure $m_n$.
\end{proof}

Recall the diagram \eqref{cdrte64i}. We define a unitary operator $U:\mathcal F\to\mathbb F$ by $U:=JI^{-1}$. We will now present en explicit form of the action of $U$. To this end, we recall the orthogonal decomposition \eqref{dy457i9} of $\mathcal F$.

\begin{corollary}\label{utr8o6reds} Let $(l_1,\dots,l_i)\in M$ and let $f^{(i)}\in L^2(T_i,dt_1\dotsm dt_i)$.
Denote
$$F(t_1,s_1,\dots,t_i,s_i)=f^{(i)}(t_1,\dots,t_i)p_{l_1}(s_1)\dotsm p_{l_i}(s_i)\in\mathcal H_{l_1,\dots, l_i}.$$
Let $n=l_1+\dots+l_i+i$.  Define a function $f^{(n)}:T_n\to\R$ by 
$$f^{(n)}(\underbrace{t_1,\dots,t_1}_{\text{$l_1+1$ times}}, \underbrace{t_2,\dots,t_2}_{\text{$l_2+1$ times}},\dots,\underbrace{t_i,\dots,t_i}_{\text{$l_i+1$ times}}):=f^{(i)}(t_1,\dots,t_i)
\quad \text{if }t_1>t_2>\dots>t_i\ge0,$$
and $f^{(n)}(t_1,\dots,t_n)=0$ otherwise. Then $UF=f^{(n)}$. Furthermore, $U\Omega=\Omega$. 
\end{corollary}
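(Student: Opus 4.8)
The plan is to exhibit the unitary $U=JI^{-1}$ explicitly by comparing it with the candidate operator built directly from the stretching rule in the statement. Define a linear map $V\colon\mathcal F\to\mathbb F$ by $V\Omega:=\Omega$ and, on each summand $\mathcal H_{l_1,\dots,l_i}$ of the orthogonal decomposition \eqref{dy457i9}, by $VF:=f^{(n)}$, where $F$ and its stretched companion $f^{(n)}$ (with $n=l_1+\dots+l_i+i$) are as in the statement. The very norm computation used to construct the measures $m^{(l_1,\dots,l_i)}$ — the one behind Theorem~\ref{f754o} — shows that $V$ carries $\mathcal H_{l_1,\dots,l_i}$ isometrically onto $L^2(T^{(l_1,\dots,l_i)},m^{(l_1,\dots,l_i)})$; summing over the multi-indices with $l_1+\dots+l_i+i=n$ and using that the sets $T^{(l_1,\dots,l_i)}$ partition $T_n$, one sees that $V$ is a well-defined unitary. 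Since $V$ is defined by the stretching rule for an arbitrary $f^{(i)}\in L^2(T_i,dt_1\dotsm dt_i)$, the corollary is exactly the assertion $U=V$; the statement $U\Omega=\Omega$ is immediate, since $I^{-1}\Omega$ is the identity operator and $J$ sends it to $\Omega$.

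To prove $U=V$ it suffices, as both operators are bounded, to check equality on a total subset of $\mathcal F$. I take the vectors $G:=I\la P^{(n)}(\omega),h_1\otimes\dots\otimes h_n\ra$ with $h_1,\dots,h_n\in B_0(\R_+)$ and $n\in\N_0$. These are total in $\mathcal F$: the restrictions $(h_1\otimes\dots\otimes h_n)|_{T_n}$ are dense in $L^2(T_n,m_n)$ and $J^{(n)}$ is unitary, so the orthogonal polynomials $\la P^{(n)}(\omega),h_1\otimes\dots\otimes h_n\ra$ are dense in $\mathbf S^{(n)}$; applying the unitary $I$ and invoking Lemma~\ref{ut96p} together with \eqref{y69089}, the $G$'s span a dense subspace of each $\mathcal H^{(n)}$ and hence of $\mathcal F$.

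It then remains to evaluate both operators on $G$. By the definition \eqref{yd7i5ei7} of $J$,
$$UG=J\la P^{(n)}(\omega),h_1\otimes\dots\otimes h_n\ra=(h_1\otimes\dots\otimes h_n)\big|_{T_n}.$$
On the other hand, Lemma~\ref{r57r9} expands $G=\sum_{(l_1,\dots,l_i)}F_{(l_1,\dots,l_i)}$ as a sum, over the multi-indices with $l_1+\dots+l_i+i=n$, of mutually orthogonal vectors $F_{(l_1,\dots,l_i)}\in\mathcal H_{l_1,\dots,l_i}$ whose spatial part is the simple tensor $(h_1\dotsm h_{l_1+1})\otimes\dots\otimes(h_{l_1+\dots+l_{i-1}+i}\dotsm h_n)$ restricted to $T_i$. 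Applying $V$ term by term, each stretched function $VF_{(l_1,\dots,l_i)}$ is supported on $T^{(l_1,\dots,l_i)}$ and there coincides, by the way the block products are formed, with $(h_1\otimes\dots\otimes h_n)|_{T^{(l_1,\dots,l_i)}}$. Since $\{T^{(l_1,\dots,l_i)}\}$ partitions $T_n$, summing gives $VG=(h_1\otimes\dots\otimes h_n)|_{T_n}=UG$. Thus $U=V$ on a total set, hence everywhere, and in particular $UF=f^{(n)}$ for every $F$ as in the statement.

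The main obstacle is the bookkeeping in the last paragraph: one must verify that, for each multi-index, the stretching of the spatial part of the Lemma~\ref{r57r9} term reproduces precisely the restriction of $h_1\otimes\dots\otimes h_n$ to the diagonal piece $T^{(l_1,\dots,l_i)}$, and that the collapse of the $s$-variables (the integration of the $p_{l_j}$ against $\nu$) is already absorbed into the constants defining $m^{(l_1,\dots,l_i)}$. Both are routine once the indices are matched carefully, and the second is exactly what the construction of $m_n$ and Theorem~\ref{f754o} encode.
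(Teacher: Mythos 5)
Your proposal is correct. It rests on the same two pillars as the paper's own proof --- the expansion in Lemma~\ref{r57r9} and the unitarity facts coming from Theorem~\ref{f754o} and \eqref{yd7i5ei7} --- but it runs the density argument in the opposite direction. The paper fixes $F$ (reducing to $f^{(i)}\in B_0(T_i)$), notes that the stretched kernel $f^{(n)}$ is itself a legitimate element of $B_0(T_n)$ (bounded, measurable, compactly supported, though concentrated on a diagonal stratum), and applies Lemma~\ref{r57r9} \emph{by approximation} to this singular kernel to get $I\la P^{(n)}(\omega),f^{(n)}\ra=F$ directly, whence $UF=J\la P^{(n)}(\omega),f^{(n)}\ra=f^{(n)}$ by \eqref{yd7i5ei7}. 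You instead promote the stretching rule to a globally defined operator $V$, prove it is unitary from the block decomposition \eqref{dy457i9} and the definition of the measures $m^{(l_1,\dots,l_i)}$, and then verify $U=V$ on the total family $I\la P^{(n)}(\omega),h_1\otimes\dots\otimes h_n\ra$, where Lemma~\ref{r57r9} applies verbatim. What your route buys: the only kernels ever fed into $\la P^{(n)}(\omega),\cdot\ra$ are tensor products, so you never have to justify extending Lemma~\ref{r57r9} to discontinuous diagonal-supported kernels (the one mildly delicate point in the paper's two-line argument, which implicitly uses the $L^2(T_n,m_n)$-continuity furnished by Theorem~\ref{f754o}); the price is the extra, routine, work of proving unitarity of $V$ and totality of the tensor-kernel polynomials in each $\mathbf S^{(n)}$. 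Both proofs ultimately reduce to the same bookkeeping identity, namely that the Lemma~\ref{r57r9} expansion of a tensor kernel lists exactly the restrictions of $h_1\otimes\dots\otimes h_n$ to the strata $T^{(l_1,\dots,l_i)}$, with the $\nu$-integrals of the $p_{l_j}^2$ absorbed into the constants defining $m^{(l_1,\dots,l_i)}$.
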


\begin{proof} The statement $U\Omega=\Omega$ is trivial. To prove that $UF=f^{(n)}$, it is sufficient to consider the case where $f^{(i)}\in B_0(T_i)$. Then, the function $f^{(n)}$ defined in Corollary \ref{utr8o6reds} belongs to $B_0(T_n)$.
It follows from Lemma~\ref{r57r9} by approximation that
$$I\la P^{(n)}(\omega),f^{(n)}\ra=f^{(i)}(t_1,\dots,t_i)p_{l_1}(s_1)p_{l_2}(s_2)\dotsm p_{l_i}(s_i)=F .$$
Thus, $I^{-1}F=\la P^{(n)}(\omega),f^{(n)}\ra$. Hence, by \eqref{yd7i5ei7},
$UF=JI^{-1}F=f^{(n)}$.
\end{proof}

\begin{proof}[Proof of Theorem \ref{yd7i649}]
Recall formula \eqref{uf77irc}.
Denote, for $h\in B_0(\R_+)$,
$$\alpha^+(h):=a^+(h)+a^{0+}(h),\quad \alpha^0(h):=a^{00}(h),\quad \alpha^-(h):=
a^{0-}(h)+a^-(h),$$
so that 
$$\la\omega,h\ra=\alpha^+(h)+\alpha^0(h)+\alpha^-(h).$$
Recall formula \eqref{crte6ei6}.
It is easy to see that $\alpha^+(h)$ maps $\mathcal H^{(n)}$ into $\mathcal H^{(n+1)}$, $\alpha^0(h)$ maps $\mathcal H^{(n)}$ into itself, and $\alpha^-(h)$ maps
$\mathcal H^{(n)}$ into $\mathcal H^{(n-1)}$. Furthermore, by using Corollary~\ref{utr8o6reds}, one  easily shows that  
$$U\alpha^+(h)U^{-1}=A^+(h),\quad U\alpha^0(h)U^{-1}=B^0(h),\quad U\alpha^-(h)U^{-1}=B^-(h).$$
\end{proof}

\begin{lemma}\label{xtse546} 
Assume that,  in formula \eqref{hdtrss}, $b_k=\lambda$ for all $k\in\N_0$ and some $\lambda\in\R$ and  $a_k=\eta$ for all $k\in\N$ for some $\eta\ge0$. Then formula \eqref{cjtre57k89} holds.
\end{lemma}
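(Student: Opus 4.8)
The plan is to start from the representation $\la\omega,h\ra=A^+(h)+B^0(h)+B^-(h)$ established in Theorem~\ref{yd7i649} and match it, term by term, against the target formula \eqref{cjtre57k89}. Since the creation operator $A^+(h)$ appearing in both formulas is literally the same operator, the entire task reduces to proving the two operator identities $B^0(h)=\lambda A^0(h)$ and $B^-(h)=A_1^-(h)+\eta A_2^-(h)$ under the standing hypotheses $b_k=\lambda$ ($k\in\N_0$) and $a_k=\eta$ ($k\in\N$). Both sides are continuous linear operators between the spaces $L^2(T_n,m_n)$, so it suffices to verify the identities stratum by stratum, on functions restricted to a single $T^{(l_1,\dots,l_i)}$, and to compare the resulting pointwise formulas; recall that the strata $T^{(l_1,\dots,l_i)}$ form a partition of $T_n$ up to a Lebesgue-null set.

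First I would treat the neutral part. By Theorem~\ref{yd7i649}, for $(t_1,\dots,t_n)\in T^{(l_1,\dots,l_i)}$ one has $\big(B^0(h)f^{(n)}\big)(t_1,\dots,t_n)=b_{l_1}h(t_1)f^{(n)}(t_1,\dots,t_n)$; the hypothesis $b_{l_1}=\lambda$, being independent of the stratum, collapses this to $\lambda h(t_1)f^{(n)}(t_1,\dots,t_n)$, which is exactly $\lambda$ times the operator $A^0(h)$ defined in Theorem~\ref{dre6i4e}. Hence $B^0(h)=\lambda A^0(h)$.

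Next I would treat the annihilation part. For $n\ge2$ and a point in $T^{(l_1,\dots,l_i)}\subset T_{n-1}$, Theorem~\ref{yd7i649} writes $B^-(h)f^{(n)}$ as the sum of an integral term and the term $a_{l_1+1}h(t_1)f^{(n)}(t_1,t_1,t_2,\dots,t_{n-1})$. The integral term is, verbatim, the definition of $A_1^-(h)$ in Theorem~\ref{dre6i4e}, while the hypothesis $a_{l_1+1}=\eta$ (applicable since $l_1+1\ge1$, so the index lies in $\N$) turns the second term into $\eta$ times the operator $A_2^-(h)$. The case $n=1$ must be checked separately: there $A_2^-(h)$ vanishes on $L^2(\R_+,dt)$ by definition, and $B^-(h)f^{(1)}=\int_{\R_+}h(u)f^{(1)}(u)\,du\,\Omega=A_1^-(h)f^{(1)}$, so the identity holds trivially. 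Combining the two ranges gives $B^-(h)=A_1^-(h)+\eta A_2^-(h)$, and substituting both identities into the decomposition of Theorem~\ref{yd7i649} yields \eqref{cjtre57k89}.

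I do not expect a genuine obstacle: the statement is a direct comparison of the explicit kernels already computed in Theorem~\ref{yd7i649} against the target operators of Theorem~\ref{dre6i4e}. The only points demanding a little care are the bookkeeping of the boundary case $n=1$ (where the annihilation operator lands in the vacuum space $\R$ and $A_2^-$ is declared to vanish) and the observation that the coefficient $a_{l_1+1}$ always carries an index in $\N$, so that the hypothesis $a_k=\eta$ does apply on every stratum.
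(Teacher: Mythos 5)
Your proof is correct and follows exactly the route the paper intends: the paper's own proof of this lemma is simply ``Immediate from Theorem~\ref{yd7i649},'' and your stratum-by-stratum comparison of $B^0(h)$ with $\lambda A^0(h)$ and of $B^-(h)$ with $A_1^-(h)+\eta A_2^-(h)$ (including the $n=1$ boundary case) is precisely the verification that immediacy elides.
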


\begin{proof}
Immediate from Theorem \ref{yd7i649}.
\end{proof}

For $n\in\mathbb N$ and $i=0,1,\dots,n$, consider a continuous linear operator $R_{i,n}:B_0(\R_+^n)\to B_0(\R_+^i)$. 
(For $i=0$, we set $B_0(\R_+^i):=\R$.) 
Then we define a mapping $\mathbf 1\otimes R_{i,n}:B_0(\R_+^{n+1})\to B_0(\R_+^{i+1})$ by 
$$ \big(\mathbf 1\otimes R_{i,n} f^{(n+1)}\big)(t_1,\dots,t_{n+1})=\big(R_{i,n}
f^{(n+1)}(t_1,\cdot)\big)(t_2,\dots,t_{n+1}).$$
As easily seen, the mapping $\mathbf 1\otimes R_{i,n}$ is continuous and linear. 

We  define mappings
$D_{n-1,n}: B_0(\R_+^n)\to B_0(\R_+^{n-1})$, $D_{n-2,n}: B_0(\R_+^n)\to B_0(\R_+^{n-2})$, and $\mathcal I_{n-2,n}: B_0(\R_+^n)\to B_0(\R_+^{n-2})$ by 
\begin{align*}
\big(D_{n-1,n}f^{(n)}\big(t_1,\dots,t_{n-1}):&=f^{(n)}(t_1,t_1,t_2,\dots,t_{n-1}),\\
\big(D_{n-2,n}f^{(n)}\big(t_1,\dots,t_{n-2}):&=f^{(n)}(t_1,t_1,t_1,t_2,\dots,t_{n-2}),\\
\big(\mathcal I_{n-2,n}f^{(n)}\big)(t_1,\dots,t_{n-2}):&=\int_{t_1}^\infty
f^{(n)}(u,u,t_1,t_2,\dots,t_{n-2})\,du.  
\end{align*}
(For $n=2$, $\mathcal I_{0,2}f^{(2)}:=\int_{\R_+}f^{(2)}(u,u)\,du$.)

Let $\lambda\in\R$ and $\eta\ge0$ be fixed.
For $n\in\mathbb N$ and $i\in\mathbb N_0$, $i\le n$, we define continuous linear operators $R_{i,n}:B_0(\R_+^n)\to B_0(\R_+^i)$ by the following recursion formulas:
\begin{align}
&R_{1,1}=\mathbf 1,\quad R_{0,1}=\mathbf 0,\notag\\
&R_{2,2}=\mathbf 1,\quad R_{1,2}=-\lambda D_{1,2},\quad R_{0,2}=-\mathcal I_{0,2},\notag\\
&R_{i,n}=\mathbf 1\otimes R_{i-1,n-1}-\lambda R_{i,n-1}D_{n-1,n}-R_{i,n-2}\mathcal I_{n-2,n}-\eta R_{i,n-2}D_{n-2,n},\quad n\ge2.\label{tye6e}
\end{align}
(In the above formula, we assume that $R_{i,n}=0$ if $i>n$.) Note hat $R_{n,n}=\mathbf 1$ for all $n\in\mathbb N$. 

For each $f^{(n)}\in B_0(\R_+^n)$, we define
\begin{equation}\label{fdyi}
\la R^{(n)}(\omega),f^{(n)}\ra:=\sum_{i=0}^n\la\omega^{\otimes i},R_{i,n}f^{(n)}\ra.\end{equation}

\begin{lemma}\label{xe7868or} Assume that the condition of Lemma \ref{xtse546}  is satisfied. Then, for each $f^{(n)}\in B_0(\R_+^n)$,
\begin{equation}\label{uyro7}
\la R^{(n)}(\omega),f^{(n)}\ra=\la P^{(n)}(\omega),f^{(n)}\ra,\end{equation}
the equality in $L^2(\tau)$.
\end{lemma}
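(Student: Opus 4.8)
The plan is to prove \eqref{uyro7} by induction on $n$, showing that $\la R^{(n)}(\omega),f^{(n)}\ra$ coincides with $\la P^{(n)}(\omega),f^{(n)}\ra$. The key observation is that, by the characterization $\mathbf{S}^{(n)} = \overline{\mathbf{P}^{(n)}}\ominus\overline{\mathbf{P}^{(n-1)}}$, it suffices to verify two things: first, that $\la R^{(n)}(\omega),f^{(n)}\ra$ and $\la P^{(n)}(\omega),f^{(n)}\ra$ have the same top-order part, i.e.\ they differ by an element of $\overline{\mathbf{P}^{(n-1)}}$; and second, that $\la R^{(n)}(\omega),f^{(n)}\ra$ is orthogonal to $\overline{\mathbf{P}^{(n-1)}}$, equivalently that it lies in $\mathbf{S}^{(n)}$. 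The first of these is immediate from the definition \eqref{fdyi}, since $R_{n,n}=\mathbf 1$, so the leading monomial of $\la R^{(n)}(\omega),f^{(n)}\ra$ is exactly $\la\omega^{\otimes n},f^{(n)}\ra$, while all lower terms $\la\omega^{\otimes i},R_{i,n}f^{(n)}\ra$ with $i<n$ belong to $\mathbf{P}^{(n-1)}$. Hence $\la R^{(n)}(\omega),f^{(n)}\ra - \la P^{(n)}(\omega),f^{(n)}\ra\in\overline{\mathbf{P}^{(n-1)}}$ automatically, and the whole content of the lemma is the orthogonality.

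To prove orthogonality, I would first reduce to the generating monomials. By linearity and continuity it is enough to treat $f^{(n)} = h_1\otimes\dots\otimes h_n$ with $h_1,\dots,h_n\in B_0(\R_+)$, and to test against an arbitrary monomial $\la\omega^{\otimes m},g^{(m)}\ra$ with $m\le n-1$; equivalently, against vectors $\la\omega,g_1\ra\dotsm\la\omega,g_m\ra\Omega$ spanning a dense subspace of $\overline{\mathbf{P}^{(m)}}$. The natural mechanism is the recursion \eqref{tye6e} itself: I expect that applying $\la\omega,h_1\ra$ (in its operator form on $L^2(\tau)$, or equivalently on $\mathcal F$ via $I$) to $\la R^{(n-1)}(\omega),h_2\otimes\dots\otimes h_n\ra$ and then subtracting the three correction terms corresponding to $-\lambda R_{i,n-1}D_{n-1,n}$, $-R_{i,n-2}\mathcal I_{n-2,n}$, and $-\eta R_{i,n-2}D_{n-2,n}$ exactly reproduces $\la R^{(n)}(\omega),h_1\otimes\dots\otimes h_n\ra$. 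This is precisely the recursion recorded in Corollary~\ref{t896p}, and under the Meixner assumption ($b_k=\lambda$, $a_k=\eta$) it should be a direct algebraic consequence of the decomposition \eqref{cjtre57k89} of $\la\omega,h\ra$ into $A^+(h)+\lambda A^0(h)+A_1^-(h)+\eta A_2^-(h)$ from Lemma~\ref{xtse546}.

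Concretely, I would transport everything through the unitary $J$ into $\mathbb F$ and compute. By induction $J\la R^{(n-1)}(\omega),h_2\otimes\dots\otimes h_n\ra = h_2\rhd\dots\rhd h_n\in L^2(T_{n-1},m_{n-1})$. Applying $\la\omega,h_1\ra$ in the form \eqref{cjtre57k89}: the creation part $A^+(h_1)$ produces the desired top term $h_1\rhd h_2\rhd\dots\rhd h_n$; the neutral part $\lambda A^0(h_1)$ produces $\lambda(h_1h_2)\rhd h_3\rhd\dots\rhd h_n$, which is cancelled by the $-\lambda D_{n-1,n}$ correction term; and the two annihilation parts $A_1^-(h_1)$ and $\eta A_2^-(h_1)$ produce, respectively, the integral term matching $-\mathcal I_{n-2,n}$ (via \eqref{su5wsu5}) and the contraction term matching $-\eta D_{n-2,n}$. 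Verifying that these four contributions match the four terms in \eqref{tye6e} is the computational heart of the argument; since by construction $\la R^{(n)}(\omega),f^{(n)}\ra$ then equals $J^{-1}(f^{(n)}\!\restriction_{T_n})$, which is $\la P^{(n)}(\omega),f^{(n)}\ra$ by \eqref{yd7i5ei7}, orthogonality to $\overline{\mathbf{P}^{(n-1)}}$ follows from Lemma~\ref{ut96p} and the orthogonal decomposition $\mathcal F=\bigoplus_n\mathcal H^{(n)}$.

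The main obstacle I anticipate is the bookkeeping in the induction step: one must carefully track how the action of the annihilation operators $A_1^-(h_1)$ and $A_2^-(h_1)$ on a monotone tensor product $h_2\rhd\dots\rhd h_n$ interacts with the lower-order terms $\la\omega^{\otimes i},R_{i,n-1}(\cdot)\ra$ for $i<n-1$, because these lower terms are themselves sums over the set partitions of $T_{n-1}$ and the annihilators act differently depending on whether the first two arguments of $f^{(n)}$ coincide (the $D_{n-1,n}$, $D_{n-2,n}$ contractions) or are integrated against each other (the $\mathcal I_{n-2,n}$ term). Making the domains and the restriction to $T_n$ precise, and checking that the recursion \eqref{tye6e} closes consistently at each level $i$ rather than only at the top level $i=n$, is where the care is required; I would handle this by comparing the two sides term-by-term in the index $i$ after applying $J$, using the explicit formulas for $A^+,A^0,A_1^-,A_2^-$ from Theorem~\ref{yd7i649} and Theorem~\ref{dre6i4e}.
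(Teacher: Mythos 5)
Your proposal is correct and follows essentially the same route as the paper's proof: reduce to elementary tensors $h_1\otimes\dots\otimes h_n$, note that the definition \eqref{tye6e} of the operators $R_{i,n}$ yields the recursion \eqref{f5p90}, and close the induction by transporting to $\mathbb F$ and letting $\la\omega,h_1\ra$ act on $h_2\rhd\dotsm\rhd h_n$ via the decomposition \eqref{cjtre57k89} from Lemma~\ref{xtse546}, so that the neutral and two annihilation contributions cancel exactly against the $\lambda D_{n-1,n}$, $\mathcal I_{n-2,n}$, and $\eta D_{n-2,n}$ correction terms. Your write-up merely makes explicit the term-by-term cancellation that the paper compresses into ``from here and Lemma~\ref{xtse546} the statement follows by induction on $n$.''
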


\begin{remark}
Since $\la P^{(n)}(\omega),f^{(n)}\ra$ depends only on the restriction of the function $f^{(n)}$ to $T_n$, formula \eqref{uyro7} means that 
the element of $L^2(\tau)$ given by formula \eqref{fdyi} also depends only on the restriction of the function $f^{(n)}$ to $T_n$.
However, this statement is not true about each individual term of the sum on the right hand side of \eqref{fdyi}. Take, for example, the term corresponding to $i=n$, i.e., $\la\omega^{\otimes n},f^{(n)}\ra$.  As easily seen, for $n\ge4$, this monomial does depend on the values of the function $f^{(n)}$ outside $T_n$.
\end{remark}

\begin{remark}\label{ye7648o5680}
It follows from Lemma \ref{xe7868or} and formula \eqref{fdyi} that, for each $f^{(n)}\in B_0(\R_+^n)$
$$\la\omega^{\otimes n},f^{(n)}\ra=\la P^{(n)}(\omega),f^{(n)}\ra-\sum_{i=0}^{n-1}\la\omega^{\otimes i},R_{i,n}f^{(i)}\ra.$$
Assume that $f^{(n)}=0$ $m_n$-a.e.\ on $T_n$. Then $\la P^{(n)}(\omega),f^{(n)}\ra=0$, so that 
$$\la\omega^{\otimes n},f^{(n)}\ra=-\sum_{i=0}^{n-1}\la\omega^{\otimes i},R_{i,n}f^{(i)}\ra\in\mathbf P^{(n-1)}.$$
Note that the above monomial $\la\omega^{\otimes n},f^{(n)}\ra$ is not necessarily equal to 0 as an element of $L^2(\tau)$.
\end{remark}

\begin{proof}[Proof of Lemma \ref{xe7868or}]

As easily seen, it suffices to prove formula \eqref{uyro7} in the case where $f^{(n)}=h_1\otimes\dots\otimes h_n$ for some $h_1,\dots,h_n\in B_0(\R_+)$.
In this case, formula \eqref{uyro7} is obviously true for $n=1,2$. Furthermore, it follows from the definition of $\la R^{(n)}(\omega),f^{(n)}\ra$ that the following recursion relation holds, for $n\ge3$,
\begin{align}
&\la R^{(n)}(\omega),h_1\otimes\dotsm\otimes h_n\ra=\la\omega,h_1\ra \la P^{(n-1)}(\omega),h_2\otimes\dotsm\otimes h_n\ra\notag\\
&\quad-\lambda\la R^{(n-1)}(\omega),(h_1h_2)\otimes h_3\otimes\dotsm\otimes h_n\ra-
\la R^{(n-2)}(\omega), \mathcal I_{3,1}(h_1,h_2,h_3)\otimes h_4\otimes \dotsm\otimes h_n\ra\notag\\
&\quad -\eta\la R^{(n-2)}(\omega),(h_1h_2h_3)\otimes h_4\otimes  \dotsm\otimes h_n\ra,\label{f5p90}
\end{align}
where the mapping $\mathcal I:B_0(\R_+)^3\to B_0(\R_+)$ is given by formula
\eqref{su5wsu5}. From here and Lemma~\ref{xtse546}  the statement follows by induction on $n$. 
\end{proof}

\begin{lemma}\label{tr78759p}
Assume that the condition of Lemma \ref{xtse546}  is satisfied. Then $\mathbf{CP}=\mathbf{OCP}$, i.e., the corresponding orthogonal polynomials belong to the Meixner class.
\end{lemma}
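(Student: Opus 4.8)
The plan is to show both inclusions $\mathbf{OCP}\subseteq\mathbf{CP}$ and $\mathbf{CP}\subseteq\mathbf{OCP}$ under the hypothesis $b_k=\lambda$, $a_k=\eta$. The key observation is that Lemma~\ref{xe7868or} provides, under exactly this hypothesis, an explicit formula expressing each orthogonal polynomial $\la P^{(n)}(\omega),f^{(n)}\ra$ as a finite sum of monomials $\la\omega^{\otimes i},R_{i,n}f^{(n)}\ra$, and that the operators $R_{i,n}$ built from $D_{n-1,n}$, $D_{n-2,n}$, $\mathcal I_{n-2,n}$ preserve continuity: each of these maps sends $C_0(\R_+^n)$ into $C_0(\R_+^i)$, as one checks directly from their defining formulas (pointwise restriction to a diagonal and integration of a compactly supported continuous function in one variable). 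Hence $\mathbf 1\otimes R_{i-1,n-1}$, being a fiberwise application, also preserves continuous compact support, and by the recursion~\eqref{tye6e} so does $R_{i,n}$ for every $i,n$.

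For the inclusion $\mathbf{OCP}\subseteq\mathbf{CP}$, I would take a generator $\la P^{(n)}(\omega),f^{(n)}\ra$ with $f^{(n)}\in C_0(T_n)$. First I would remark that such an $f^{(n)}\in C_0(T_n)$ can be extended to a function in $C_0(\R_+^n)$ (and by Theorem~\ref{f754o} the orthogonal polynomial depends only on the restriction to $T_n$, so the choice of extension is immaterial in $L^2(\tau)$). Then formula~\eqref{uyro7} of Lemma~\ref{xe7868or} expresses this orthogonal polynomial as $\sum_{i=0}^n\la\omega^{\otimes i},R_{i,n}f^{(n)}\ra$, and since each $R_{i,n}f^{(n)}\in C_0(\R_+^i)$ by the continuity-preservation just discussed, the right-hand side is by definition a continuous polynomial of $\omega$. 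Therefore each generator of $\mathbf{OCP}$ lies in $\mathbf{CP}$, and $\mathbf{OCP}\subseteq\mathbf{CP}$ follows by linearity.

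For the reverse inclusion $\mathbf{CP}\subseteq\mathbf{OCP}$, I would argue by induction on the order $n$ of the continuous polynomial. It suffices to show that every continuous monomial $\la\omega^{\otimes n},f^{(n)}\ra$ with $f^{(n)}\in C_0(\R_+^n)$ lies in $\mathbf{OCP}$. Rearranging~\eqref{fdyi} and~\eqref{uyro7} gives the identity
\begin{equation*}
\la\omega^{\otimes n},f^{(n)}\ra=\la P^{(n)}(\omega),f^{(n)}\ra-\sum_{i=0}^{n-1}\la\omega^{\otimes i},R_{i,n}f^{(n)}\ra.
\end{equation*}
The first term on the right is an orthogonal polynomial with coefficient $f^{(n)}\in C_0(\R_+^n)$, hence (after restriction to $T_n$) a generator of $\mathbf{OCP}$. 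Each remaining term $\la\omega^{\otimes i},R_{i,n}f^{(n)}\ra$ is a continuous monomial of strictly lower order $i\le n-1$, because $R_{i,n}f^{(n)}\in C_0(\R_+^i)$; by the induction hypothesis each such term belongs to $\mathbf{OCP}$. Thus $\la\omega^{\otimes n},f^{(n)}\ra\in\mathbf{OCP}$, completing the induction and yielding $\mathbf{CP}\subseteq\mathbf{OCP}$.

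The one point requiring genuine care — the step I expect to be the main obstacle — is verifying rigorously that every $R_{i,n}$ maps $C_0(\R_+^n)$ into $C_0(\R_+^i)$; all the rest is bookkeeping with the two inclusions. The subtlety is that the diagonal-restriction operators $D_{n-1,n}, D_{n-2,n}$ and the partial-integration operator $\mathcal I_{n-2,n}$ were originally defined on $B_0$ and one must confirm that continuity and compactness of support are genuinely preserved (in particular, that the integral defining $\mathcal I_{n-2,n}$ depends continuously on the remaining variables and retains compact support, which follows from uniform continuity of a compactly supported continuous function). Once this is established for the base operators, the stability of $C_0$ under the recursion~\eqref{tye6e} follows formally, since $\mathbf 1\otimes R_{i-1,n-1}$ preserves $C_0$ whenever $R_{i-1,n-1}$ does, and finite linear combinations of $C_0$-preserving maps remain $C_0$-preserving.
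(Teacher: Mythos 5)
Your proof is correct and follows essentially the same route as the paper: establish that the operators $R_{i,n}$ from the recursion \eqref{tye6e} preserve $C_0$, use Lemma \ref{xe7868or} together with a continuous extension of $f^{(n)}\in C_0(T_n)$ to get $\mathbf{OCP}\subseteq\mathbf{CP}$, and invert the triangular system by induction (the paper packages this inversion as explicit operators $L_{i,n}$, you phrase it as direct induction on membership in $\mathbf{OCP}$, which is the same argument) to get $\mathbf{CP}\subseteq\mathbf{OCP}$.
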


\begin{proof}
For each $n\in\mathbb N$, we consider the topology on $C_0(\R_+^n)$ that is induced by the topology on $B_0(\R_+)$. Thus, a sequence $(h_n)_{n=1}^\infty$  converges to a function $h$ in $C_0(\R_+)$ if all functions $h_n$ vanish outside a compact set in $\R_+$ and $\sup_{t\in\R_+}|h_n(t)-h(t)|\to0$ as $n\to\infty$.
For $n=0$, we will also set $C_0(\R_+^n):=\R$.

Let $n\in\mathbb N$ and $i=0,1,\dots,n$. 
If $R_{i,n}:C_0(\R_+^n)\to C_0(\R_+^i)$ is a continuous linear operator, then so is the mapping $\mathbf 1\otimes R_{i,n}:C_0(\R_+^{n+1})\to C_0(\R_+^{i+1})$.
Hence, we easily conclude that the (restrictions of the) operators $R_{i,n}$ defined by formula \eqref{tye6e} are continuous linear operators acting from 
$C_0(\R_+^n)$ to $C_0(\R_+^i)$, respectively. Therefore, by \eqref{fdyi}, for each $f^{(n)}\in C_0(\R_+^n)$, $\la R^{(n)}(\omega),f^{(n)}\ra\in\mathbf{CP}$. Now, Lemma~\ref{xe7868or} implies that $\la P^{(n)}(\omega),f^{(n)}\ra\in\mathbf{CP}$. 

Let $g^{(n)}\in C_0(T_n)$. Choose any $f^{(n)}\in C_0(\R_+^n)$ such that the restriction of $f^{(n)}$ to $T_n$ is equal to $g^{(n)}$. Then, by the proved above
$$\la P^{(n)}(\omega),g^{(n)}\ra=\la P^{(n)}(\omega),f^{(n)}\ra\in\mathbf{CP},$$
hence $\mathbf{OCP}\subset\mathbf{CP}$.

Let us now prove the inverse inclusion. 
Since $R_{n,n}=\mathbf 1$ for each $n$, it easily follows from \eqref{fdyi} and
Lemma~\ref{xe7868or} by induction on $n$ that, for each $f^{(n)}\in C_0(\R_+^n)$,
$$ \la \omega^{\otimes n},f^{(n)}\ra=\sum_{i=0}^n\la P^{(i)}(\omega),L_{i,n}f^{(n)}\ra,$$
where $L_{i,n}:C_0(\R_+^n)\to C_0(\R_+^i)$ are continuous linear operators. 
Let $g^{(i)}\in C_0(T_i)$ denote the restriction of the  function $L_{i,n}f^{(n)}$ to $T_i$. Then
$$ \la \omega^{\otimes n},f^{(n)}\ra=\sum_{i=0}^n\la P^{(i)}(\omega),g^{(i)}\ra.$$
Hence, $\mathbf{CP}\subset\mathbf{OCP}$.
\end{proof}

\begin{proof}[Proof of Theorem \ref{dre6i4e}]
By Lemmas \ref{xtse546} and \ref{tr78759p}, it remains to prove that, if 
$\mathbf{CP}=\mathbf{OCP}$, then the condition of Lemma \ref{xtse546} is satisfied. 

So we assume $\mathbf{CP}=\mathbf{OCP}$. Let $f^{(n)}\in C_0(T_n)$. Then
$\la P^{(n)}(\omega),f^{(n)}\ra\in\mathbf{CP}$. Since $\mathbf{CP}$ is an algebra under multiplication of two continuous polynomials, we conclude that, for each $h\in C_0(\R_+)$, $\la\omega,h\ra \la P^{(n)}(\omega),f^{(n)}\ra\in\mathbf{OCP}$. Hence, by Theorem~\ref{yd7i649}, there exist continuous functions $g^{(n)}\in C_0(T_n)$ and $g^{(n-1)}\in C_0(T_{n-1})$ such that
\begin{align}
&B^-(h)f^{(n)}=g^{(n-1)}\ \text{$m_{n-1}$-a.e.}\label{yufr767o}\\
&B^0(h)f^{(n)}=g^{(n)}\ \text{$m_n$-a.e.}\label{yrde6u4e7i}
\end{align}
If $a_k=0$ for all $k\in\mathbb N$, we set $\eta=0$ and $\lambda=b_0$, and the condition of Lemma \ref{xtse546} is satisfied. So, we only have to consider the case where $a_1>0$. Set $\eta=a_1$. Using the construction of the measure $m_n$, the definition of the operator $B^-(h)$ and formula \eqref{yufr767o}, we get by induction on $k$ that $a_k=\eta$ for all  $k\in\mathbb N$. But this also implies that $c_k>0$ for all $k\in\mathbb N$.
Now set $\lambda=b_0$. Using the definition of $B^0(h)$ and \eqref{yrde6u4e7i}, we deduce from \eqref{yrde6u4e7i} by induction on $k$ that $b_k=\lambda$ for all $k\in\mathbb N_0$.  
\end{proof}

\begin{proof}[Proof of Corollary \ref{t896p}] Immediate from Lemma \ref{xe7868or} and formula \eqref{f5p90}.
\end{proof}

\begin{center}
{\bf Acknowledgements}\end{center}
E.L. acknowledges the financial support of 
the SFB 701 ``Spectral
structures and topological methods in mathematics'', Bielefeld University.
E.L. is grateful to the Mathematical Institute of Wroclaw University for their hospitality and financial support during E.L.'s stay at the Institute. The authors would like to thank Marek Bo\.zejko and Janusz Wysocza\'nski for numerous useful discussions.

\end{document}